\title{Scattered linear sets in a finite projective line and translation planes}
\author{Valentina Casarino, Giovanni Longobardi, Corrado Zanella}
\date{}
\newcommand{\ptra}{{\mathcal A}}
\newcommand{\cA}{{\mathcal A}}
\newcommand{\cB}{{\mathcal B}}
\newcommand{\cD}{{\mathcal D}}
\newcommand{\F}{{\mathbb F}}
\newcommand{\Fq}{\F_q}
\newcommand{\Fp}{\F_p}
\newcommand{\Fqt}{\F_{q^t}}
\newcommand{\la}{\langle}
\newcommand{\ra}{\rangle}
\newcommand{\Mod}[1]{\ (\mathrm{mod}\ #1)}
\newtheorem{theorem}{Theorem}[section]
\newtheorem{corollary}[theorem]{Corollary}
\newtheorem{proposition}[theorem]{Proposition}
\DeclareMathOperator{\PG}{{PG}}
\DeclareMathOperator{\GL}{{GL}}
\DeclareMathOperator{\PGL}{{PGL}}
\DeclareMathOperator{\GaL}{\Gamma L}
\DeclareMathOperator{\Gal}{Gal}
\DeclareMathOperator{\N}{N}
\DeclareMathOperator{\id}{id}
\theoremstyle{definition}%% Hans
\newtheorem{definition}[theorem]{Definition}
\newtheorem{remark}[theorem]{Remark}
\begin{document}

\maketitle
\begin{abstract}
In \cite{LuPo01}, Lunardon and Polverino construct a translation plane starting from a
scattered linear set of pseudoregulus type in $\PG(1,q^t)$.
In this paper a similar construction of a translation plane  $\ptra_f$
obtained from any scattered linearized polynomial $f(x)$ in $\Fqt[x]$ is described and investigated.
A class of quasifields giving rise to such planes is defined.
Denote by $U_f$ the $\Fq$-subspace of $\Fqt^2$ associated with $f(x)$.
If $f(x)$ and $f'(x)$ are scattered,
then $\ptra_f$ and $\ptra_{f'}$ are isomorphic if and only if
$U_f$ and $U_{f'}$ belong to the same orbit under the action of $\GaL(2,q^t)$.
This gives rise to the same number of distinct translation planes as the number of inequivalent 
scattered linearized polynomials.
In particular, for any scattered linear set $L$ of maximum rank in $\PG(1,q^t)$ there are 
$c_\Gamma(L)$ 
pairwise  non-isomorphic translation planes, where $c_\Gamma(L)$ 
denotes the $\GaL$-class of $L$,
as defined in \cite{CsMaPo18} by Csajb\'ok, Marino and Polverino.
A result by Jha and Johnson \cite{JhJo08} allows to describe the automorphism groups of
the planes obtained from the linear sets not of pseudoregulus type defined in \cite{LuPo01}. 
\end{abstract}

\bigskip
\emph{AMS subject classifications:} 51A40, 51E14, 51A35

\bigskip
\emph{Keywords:} Andr\'e plane; hyper-regulus; linear set; net replacement;
partial spread; projective line; quasifield; translation plane

\section{Introduction}
The purpose of this work is to study
translation planes  that arise  from   a  scattered linear set
of maximum rank in a finite projective line by replacing a related hyper-regulus.
Early investigations of this kind, concerning
the scattered linear sets of pseudoregulus type in $\PG(1,q^t)$, can be found
in    \cite{LuPo01}.
The focus in that paper was on the theory of blocking sets,
%That paper  was mainly motivated by  blocking sets  theory, 
 but it  is worth noticing that  the planes constructed in   \cite{LuPo01} %%may be rephrased in terms of linear sets.
only depend on linear sets in the projective line.

Linear sets  
generalize the notion of subgeometry of a projective space and 
have ubiquitous applications in finite
geometry,  being connected with many different  %mathematical 
objects, such as blocking sets, two-intersection sets, complete caps, translation
spreads of the Cayley Generalized Hexagon, translation ovoids of polar
spaces, semifield flocks, finite semifields and rank metric codes 
(see e.g. \cite{LaVV15,Po10,Sh16}).
The construction carried out in this article is also motivated by some results in \cite{JhJo08}, where  
the authors  introduce a new class of hyper-reguli in a Desarguesian spread, and then 
investigate the translation planes obtained by the replacement of such hyper-reguli.
They prove that such planes are neither Andr\'e nor generalized Andr\'e planes.

What follows  is %a plan of the proof of Theorem 1.1, together with 
a description both of the main results and 
of the structure of this paper.
%%%The structure of the paper is as follows.
In Section \ref{Notations}  some terminology and notation are introduced.
%, leading, in particular, to a precise definition of an  Andr\'e $q$-plane. 
In Section \ref{s:spaq}
a quasifield $\mathcal Q_f$ and  a  translation plane $\mathcal A_f$ are suitably associated 
with any scattered
$\Fq$-linearized polynomial $f(x)\in\Fqt[x]$, $q>2$.
Section \ref{S.equiv} is devoted to the question of 
isomorphism between translation planes corresponding  to distinct scattered polynomials;
a characterization of isomorphic translation planes is provided
(Theorems \ref{t:main} and  \ref{th:pseudoreg}). 
More precisely, given a scattered linearized polynomial $f(x)\in\Fqt[x]$,
denote by $U_f$ and $L_f$ the $\Fq$-subspace of $\Fqt^2$ and the $\Fq$-linear set in
$\PG(1,q^t)$ associated with $f(x)$, respectively.
Let $f(x), f'(x)\in\Fqt[x]$ be  scattered linearized polynomials;
then $\ptra_f$ and $\ptra_{f'}$ are isomorphic if and only if
$f(x)$ and $f'(x)$ are equivalent, that is,
$U_f$ and $U_{f'}$ belong to the same orbit under the action of $\GaL(2,q^t)$.

The number of known scattered linearized polynomials is
constantly growing and also includes classes of infinitely many
extensions of a field $\Fq$: see e.g.\ \cite{BaZaZu20,LoMaTrZh21,LoZa21}
and the references therein. By virtue of the main
result of this paper (Theorem \ref{t:main}), this leads to as many
distinct translation planes.

As is known \cite{CsZa16}, two $\Fq$-subspaces of $\Fqt^2$ defining the same scattered linear
set of maximum rank in $\PG(1,q^t)$ not necessarily are in the same orbit under the action of 
$\GaL(2,q^t)$.
This led to the notion of $\GaL$-class of a linear set $L$ 
\cite{CsMaPo18} as the maximum number $c_\Gamma(L)$
of $\Fq$-subspaces defining $L$ and belonging to distinct orbits under 
the action of $\GaL(2,q^t)$.
As a consequence of Theorem \ref{t:main}, any scattered linear set $L$ of maximum rank in $\PG(1,q^t)$ 
gives rise to $c_\Gamma(L)$
pairwise  non-isomorphic translation planes.

In Section \ref{s:LP} the discussion is focused on the number and the automorphism
group of the
translation planes associated with  Lunardon-Polverino polynomials, which are described
 by means of the results in \cite{JhJo08}. 
%In the final section some open questions are posed.

\section{Notation and preliminaries}\label{Notations}
Let $q$ be a power of a prime $p$, and $r,t\in\mathbb N$, $r>0$, $t>1$.
Let $U$ be an $r$-dimensional $\Fq$-subspace of the $(dt)$-dimensional vector space
$V(\Fqt^d,\Fq)$.
For a set $S$ of field elements (or vectors), we denote by $S^*$ the set of non-zero elements 
(non-zero vectors) of $S$.
The following subset of $\PG(d-1,q^t)=\PG(\Fqt^d,\Fqt)$
\[
L_U=\{\la v\ra_{\Fqt}\colon v\in U^*\}
\]
%(\footnote{In this note, $S^*=S\setminus\{0\}$ for any set containing a zero element.})
is called \emph{$\Fq$-linear set} (or just \emph{linear set}) of \emph{rank $r$}.
The linear set $L_U$ is \emph{scattered} if it has the maximum possible size related to the
given $q$ and $r$, that is, $\#L_U=(q^r-1)/(q-1)$.
Equivalently, $L_U$ is scattered if and only if
\begin{equation}\label{e:scattered}
  \dim_{\Fq}\left(U\cap\la v\ra_{\Fqt}\right)\le1\quad\mbox{ for any }v\in\Fqt^d.
\end{equation}
Any $\Fq$-subspace $U$ satisfying \eqref{e:scattered} is a \emph{scattered $\Fq$-subspace}.

Clearly, any $\Fq$-linear set in $\PG(d-1,q^t)$ of rank greater than $t(d-1)$ coincides
with $\PG(d-1,q^t)$.
The scattered linear sets of rank as large as
possible are called of \emph{maximum rank}. By \cite{BlLa00}, any scattered $\Fq$-linear set in
$\PG(d - 1, q^t )$ has rank at most $td/2$; for $td$ even the bound is sharp~\cite{CsMaPoZu17}.

In this note only $\Fq$-linear sets of maximum rank in $\PG(1,q^t)$ are dealt with.
They are associated with linearized polynomials.
An \emph{$\Fq$-linearized polynomial} in $\Fqt[x]$ is of type $f(x)=\sum_{i=0}^ka_ix^{q^i}$ ($k\in\mathbb N$).
If $a_k\neq0$, then $k$ is the \emph{$q$-degree} of $f(x)$.
It is well-known that the $\Fq$-linearized polynomials of $q$-degree less than $t$ in $\Fqt[x]$
are in one-to-one correspondence with the endomorphisms of the vector space
$V(\Fqt,\Fq)$. 
An $\Fq$-linearized polynomial $f(x)\in\Fqt[x]$ is called \emph{scattered} if for any $y,z\in\Fqt$
the condition $zf(y)-yf(z)=0$ implies that $y$ and $z$ are $\Fq$-linearly dependent.

Let $f(x)\in\Fqt[x]$ be an $\Fq$-linearized polynomial and define 
\[U_f=\{(x,f(x))\colon x\in\Fqt\},\]
and $L_f=L_{U_f}$.
Such $L_f$ is an $\Fq$-linear set of maximum rank of $\PG(1,q^t)$, and is scattered if and only if
$f(x)$ is.
Two $\Fq$-linearized polynomials $f(x)$ and $f'(x)$ in $\Fqt[x]$ are said to be \emph{equivalent}
when a $\kappa\in\GaL(2,q^t)$ exists such that $U_f^\kappa=U_f'$ \cite{CsMaPo18}.

%Since $\PGL(2,q^t)$ acts $3$-transitively on $\PG(1,q^t)$, if $q>2$ 
Since $\PGL(2, q^t )$ acts $3$-transitively on $\PG(1, q^t )$ and
since the size of an $\Fq$-linear set of $\PG(1, q^t)$ is at most $(q^t - 1)/(q - 1)$, if
$q > 2$, then
any linear set of maximum rank is projectively equivalent to an $L_U$ such that
\[
\left\{\la(1,0)\ra_{\Fqt},\la(0,1)\ra_{\Fqt},\la(1,1)\ra_{\Fqt}\right\}\cap L_U=\emptyset.
\]
Therefore,
a linearized polynomial $f(x)$ exists such that $U=U_f$ and $\ker f=\{0\}=\ker(f-\id)$.

By abuse of notation, $L_f$ will also denote the set $\{f(x)/x\colon x\in\Fqt^*\}$ of the
nonhomogeneous projective coordinates of the points belonging to the set $L_f$.
According to the convention above for $f(x)$, $0,1\notin L_f$.

%\medskip

A (right) \emph{quasifield} is an ordered triple $(Q,+,\circ)$, where $Q$ is a set,
$(Q,+)$ is an abelian group, $(Q^*,\circ)$ is a loop, $(x+y)\circ m=x\circ m+y\circ m$
for any $x,y,m\in Q$, and for any $a,b,c\in Q$ with $a\neq b$, 
the following equation in the
unknown $x\in Q$ has a unique solution: $x\circ a=x\circ b+c$.
The \emph{kernel} of the quasifield  $(Q,+,\circ)$ is
\begin{gather*}
  K(Q)=\{k\in Q\colon k\circ(x+y)=k\circ x+k\circ y\mbox{ and }
  k\circ(x\circ y)=(k\circ x)\circ y\\ \mbox{ for any }x,y\in Q\}.
\end{gather*}
The kernel of $Q$ is a skewfield and $Q$ is a left vector space over $K(Q)$ 
\cite{An54}\cite[Chapter 1]{Kn95}.
A  quasifield satisfying the right distributive property is a \emph{semifield}; an associative quasifield is a
\emph{nearfield}.
From now on, any quasifield is assumed to be finite.
Therefore, $K(Q)$ is a finite field.

%\medskip

A \emph{partial planar spread} of a vector space $\mathbb V$ is a collection $\cB$ of at least
three subspaces
such that $\mathbb V=V\oplus V'$ for any $V,V'\in\cB$ with $V\neq V'$.
If $\cB$ is a partial planar spread of $\mathbb V$ such that
for any $v\in\mathbb V$ there exists a $V\in\cB$ such that $v\in V$, then $\cB$ is a
\emph{planar spread} of $\mathbb V$.
The \emph{Desarguesian planar spread} of the $(2t)$-dimensional $\Fq$-vector space $\Fqt^2$ is
\[
  \cD=\left\{\la v\ra_{\Fqt}\colon v\in\left(\Fqt^2\right)^*\right\}.
\]

Let $(Q,+,\circ)$ be a finite quasifield.
Define $V_\infty=\{0\}\times Q$, and
\[
  V_m=\{(x,x\circ m)\colon x\in Q\}\mbox{ for }m\in Q.
\]
Then $\cB(Q)=\{V_m\colon m\in Q\cup\{\infty\}\}$ is a planar spread of the $K(Q)$-vector space
$Q^2$.
Conversely, for any planar spread $\cB$ a quasifield $Q$ exists such that $\cB=\cB(Q)$.

Let $\cB$ be a planar spread of a vector space $\mathbb V$ over $\Fqt$.
Other planar spreads can be constructed by using the following technique of \emph{net replacement}.
If $\cB'$ and $\cB''$ are distinct partial planar spreads of $\mathbb V$ such that
$\cB'\subseteq\cB$, and
\[
  \bigcup_{V'\in\cB'}V'=\bigcup_{V''\in\cB''}V'',
\]
then $(\cB\setminus\cB')\cup\cB''$ is a planar spread of $\mathbb V$.
%Such $\cB'$ is called \emph{replaceable (or derivable) net}.
In the special case where $\dim_{\Fq}\mathbb V=2t$ and $\#\cB'=\#\cB''=(q^t-1)/(q-1)$,
the partial spreads $\cB'$ and $\cB''$ are called \emph{hyper-reguli} \cite[Chapter 19]{BiJhJo07}.
Such hyper-reguli are also called \emph{replacement set} of each other.
In some of the literature the definition requires that $\dim_{\Fq}(V'\cap V'')=1$ for any
$V'\in\cB'$ and $V''\in\cB''$, e.g.~\cite{JhJo08}.

In \cite{LuPo01}, Lunardon and Polverino show that $\{\la(x,x^q)\ra_{\Fqt}\colon x\in\Fqt^*\}$
is a hyper-regulus of $\Fqt^2$, contained in $\cD$, leading to a net replacement.
Actually the same arguments as in \cite[Section 3]{LuPo01} lead to a hyper-regulus 
starting from
any scattered $\Fq$-linear set of maximum rank in $\PG(1,q^t)$.
This fact is folklore and addressed in Section \ref{s:spaq}. 

The \emph{translation plane $\ptra(\cB)$ associated with the planar spread} $\cB$ of $\mathbb V$
is the plane whose points are the elements of $\mathbb V$  and whose
lines are the cosets of the elements of $\cB$ in the group $(\mathbb V,+)$.
Given a quasifield $Q$, the \emph{translation plane $\ptra(Q)$ associated with $Q$}  
is the translation plane $\ptra(\cB(Q))$ associated with $\cB(Q)$.
In other words, the lines 
are represented by the equations of type $x=b$ and $y=x\circ m+b$ ($m,b\in Q$).

If $\ptra(Q)$ and $\ptra(Q')$ ($Q$ and $Q'$ two quasifields) are isomorphic
translation planes, then $K(Q)$ and $K(Q')$ are isomorphic fields.
Hence the kernel of $Q$ is a geometric invariant of the plane $\ptra(Q)$,
also called the \emph{kernel of the translation plane $\ptra(Q)$}.
See \cite{Kn95} for a purely geometric definition.

Let $\N_{q^t/q}(m)=m^{(q^t-1)/(q-1)}$ denote the \emph{norm} of $m\in\Fqt$ over $\Fq$. 
An \emph{Andr\'e $q$-plane} is associated to the planar spread of $\Fqt^2$ obtained from
$\cD$ by replacing each partial spread of type
\[
  \cB'_{\xi}=\{\la(1,m)\ra_{\Fqt}\colon \N_{q^t/q}(m)=\xi\}\qquad(\xi\in\Fq^*)
\]
with
\[
  \cB''_{\mu(\xi)}=\left\{\{(x,x^{\mu(\xi)}m)\colon x\in\Fqt\}\colon \N_{q^t/q}(m)=\xi\right\},
\]
where $\mu:\Fq^*\rightarrow\Gal(\Fqt/\Fq)$ is any map.
Actually, $\mu(\xi)=\id$ means no replacement for that value of $\xi$.
Any $\cB'_{\xi}$ is called \emph{Andr\'e $q$-net}.
If for $\xi\in\Fq^*$, the map $\mu(\xi)$ is $x\mapsto x^{q^s}$,
the partial spread $\cB''_{\mu(\xi)}$ is an \emph{Andr\'e $q^s$-replacement}
\cite[Definition 16.1]{BiJhJo07}.

\section{Scattered polynomials and quasifields}\label{s:spaq}

In this section $f(x)\in\Fqt[x]$ is a scattered $\Fq$-polynomial, $q>2$, satisfying $0,1\notin L_f$.
\begin{proposition}\label{p:quasicorpo}
Let $q>2$.
Let $Q_f=\Fqt$ be endowed with the sum of $\Fqt$, and define
\begin{equation}\label{e:quasicorpo}
x\circ m=\begin{cases}
xm&\mbox{ if }m\notin L_f,\\
h^{-1}f(hx)&\mbox{ if }m\in L_f\mbox{ and }f(h)-mh=0,\ h\neq0
\end{cases}
\end{equation}
for any $x,m\in Q_f$.
Then $(Q_f,+,\circ)$ is a quasifield, and $K(Q_f)=\Fq$.
\end{proposition}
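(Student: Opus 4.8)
\emph{The easy parts.} The plan is to verify the quasifield axioms for $(Q_f,+,\circ)$ one by one and then to compute the kernel; throughout, the scattered hypothesis on $f$ is what governs the intersections of the graphs $V_m=\{(x,x\circ m)\colon x\in\Fqt\}$. First I would show $\circ$ is well defined: if $m\in L_f$ and $f(h)=mh$, $f(h')=mh'$ with $h,h'\neq0$, then $h'f(h)-hf(h')=0$, so by the polynomial form of the scattered condition $h$ and $h'$ are $\Fq$-proportional, say $h'=\lambda h$ with $\lambda\in\Fq^*$, and $\Fq$-linearity of $f$ gives $h'^{-1}f(h'x)=h^{-1}f(hx)$ for all $x$. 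Then $(Q_f,+)$ is the additive group of $\Fqt$; the right-distributive law $(x+y)\circ m=x\circ m+y\circ m$ follows from additivity of $f$ (if $m\in L_f$) or of $\Fqt$ (if $m\notin L_f$), so every right multiplication $R_m\colon x\mapsto x\circ m$ is additive; and for $m\neq0$ the map $R_m$ is a bijection of $\Fqt$, being $x\mapsto xm$ if $m\notin L_f$ and $x\mapsto h^{-1}f(hx)$ if $m\in L_f$, the latter bijective since $\ker f=\{0\}$ (which is precisely $0\notin L_f$) and $f$ acts on a finite space. Finally $1$ is a two-sided identity, using $1\notin L_f$. This already settles the right-distributive axiom and the unique solvability of $y\circ a=b$ for $a\neq0$.

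\emph{Disjointness of the graphs.} Substituting $u=hx$ shows $V_m=h^{-1}U_f$ when $m\in L_f$ and $V_m=\la(1,m)\ra_{\Fqt}$ when $m\notin L_f$. The key step is $V_a\cap V_b=\{0\}$ for $a\neq b$ in $Q_f$. If $a,b\notin L_f$ this is immediate; if $a\in L_f$, $b\notin L_f$, a nonzero common point (necessarily with nonzero first coordinate) would force $b=f(w)/w\in L_f$, a contradiction; and if $a,b\in L_f$, a nonzero common point $(p,q)$, necessarily with $p\neq0$, yields two nonzero vectors $(hp,hq)$ and $(h'p,h'q)$ inside $U_f\cap\la(p,q)\ra_{\Fqt}$, which has $\Fq$-dimension at most $1$ by scatteredness; hence $h'=\lambda h$ with $\lambda\in\Fq^*$, and then $a=f(h)/h=f(h')/h'=b$. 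Since $(a,a\circ x)\in V_x$, this disjointness forces $x\mapsto a\circ x$ to be injective, hence bijective, for $a\neq0$, so $a\circ x=b$ is uniquely solvable; with the identity, $(Q_f^*,\circ)$ is a loop. Disjointness also makes the additive map $R_a-R_b$ injective, hence surjective, so $x\circ a=x\circ b+c$ has a unique solution. Therefore $(Q_f,+,\circ)$ is a quasifield.

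\emph{The kernel.} The inclusion $\Fq\subseteq K(Q_f)$ is a direct check: for $c\in\Fq$ one has $c\circ m=cm$ for every $m$ (when $m\in L_f$ because $f(hc)=cf(h)$), and both defining conditions of the kernel then become identities in $\Fqt$. For the reverse inclusion, let $k\in K(Q_f)$ and put $L_k\colon x\mapsto k\circ x$. Additivity of $L_k$ and the condition $L_k(x\circ m)=(k\circ x)\circ m$ with $m\notin L_f$ give $L_k(m)=km$ for all $m\notin L_f$. Here $q>2$ enters: $\#(\Fqt\setminus L_f)=q^t-(q^t-1)/(q-1)>q^t/p$, so $\Fqt\setminus L_f$ lies in no $\Fp$-hyperplane and hence spans $\Fqt$ over $\Fp$; as $L_k$ and $x\mapsto kx$ are $\Fp$-linear and agree on it, $L_k$ is multiplication by $k$ on all of $\Fqt$. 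Inserting this into $L_k(x\circ m)=(k\circ x)\circ m$ with $m\in L_f$ gives $f(kw)=kf(w)$ for all $w$, i.e.\ $f$ is $\Fq(k)$-linear; if $s=[\Fq(k):\Fq]$, then $\Fq(k)\cdot(1,f(1))$ is an $s$-dimensional $\Fq$-subspace of $U_f\cap\la(1,f(1))\ra_{\Fqt}$, whence $s=1$ by scatteredness and $k\in\Fq$. Thus $K(Q_f)=\Fq$.

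\emph{Main obstacle.} The crux is the disjointness $V_a\cap V_b=\{0\}$ of the second paragraph: this is the single point at which the scattered hypothesis is genuinely essential, and it simultaneously produces the loop structure on $Q_f^*$ and the parallelism axiom. The rest is routine bookkeeping, apart from the mild cardinality estimate $\#(\Fqt\setminus L_f)>q^t/p$ (which uses $q>2$) and the observation that $\F_{q^s}$-linearity of $f$ with $s>1$ would violate scatteredness.
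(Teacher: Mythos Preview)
Your argument is correct. The paper's own proof dismisses the quasifield verification as ``routine'' and spends its effort only on the kernel, so your explicit spread-disjointness argument for the axioms is a welcome expansion rather than a deviation. Where you genuinely diverge is in the proof of $K(Q_f)\subseteq\Fq$. The paper argues by contradiction on left distributivity: since $\Fqt\setminus L_f$ is not an additive subgroup, one can pick $m,n\notin L_f$ with $w=m+n\in L_f$; for $x\notin\Fq$ one then has $x\circ m+x\circ n=x(m+n)$, whereas $x\circ w=h^{-1}f(hx)$, and equality of these would give $f(hx)/(hx)=f(h)/h$ with $hx\notin\la h\ra_{\Fq}$, contradicting scatteredness. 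Your route instead first forces the left multiplication $L_k$ to coincide with field multiplication by $k$ (via the spanning set $\Fqt\setminus L_f$, which is where $q>2$ enters for you), and then extracts $\Fq(k)$-linearity of $f$ from commutation with $R_m$ for $m\in L_f$, finishing with the observation that $\Fq(k)\cdot(1,f(1))\subseteq U_f\cap\la(1,f(1))\ra_{\Fqt}$ has $\Fq$-dimension at most one. The paper's argument is shorter and pinpoints exactly which kernel axiom fails; yours gives the additional structural information that any kernel element acts on $Q_f$ by field multiplication and that $f$ is linear over the kernel, which is conceptually pleasant and reusable.
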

\begin{proof}
To verify the quasifield axioms is a routine proof.

Next, note that $x\circ m=xm$ for any $x\in\Fq$ and $m\in Q_f$.
Furthermore, the map $\rho_m: Q_f\rightarrow Q_f$ defined by
\begin{equation}\label{e:rhom}
\rho_m(x)=x\circ m
\end{equation}
is $\Fq$-linear for any $m\in Q_f$.
Then for any $x\in\Fq$ and $m,n\in Q_f$, it holds that
$x\circ(m+n)=x(m+n)=xm+xn=x\circ m+x\circ n$, and
$x\circ(m\circ n)=x(m\circ n)=x\rho_n(m)=\rho_n(xm)=(x\circ m)\circ n$.
This implies $\Fq\subseteq K(Q_f)$.

Since $\Fqt\setminus L_f$ is not a subgroup of $(\Fqt,+)$, two elements
$m,n\in\Fqt\setminus L_f$ exist such that $w=m+n$ belongs to $L_f$.
Let $x\in Q_f\setminus\Fq$, and assume $x\circ w=xw$.
On the other hand, $x\circ w=h^{-1}f(hx)$ with $f(h)-wh=0$ for some $h\neq0$.
Hence
\[
  \frac{f(hx)}{hx}=w=\frac{f(h)}h
\]
that together with $\la h\ra_{\Fq}\neq\la hx\ra_{\Fq}$ contradicts the assumption that
$f(x)$ is scattered.
As a consequence,
\[
  x\circ(m+n)\neq x(m+n)=x\circ m+x\circ n
\]
and this implies $x\notin K(Q_f)$.
\end{proof}
%In the previous proof, $Q_f$ has been proved not to be a semifield.
%The quasifield is not even a nearfield, that is, the product is not associative.
%\begin{proposition}
%  The quasifield $(Q_f,+,\circ)$ is not a nearfield.
%\end{proposition}
%\begin{proof}
%First of all, there exist $m,n\in \Fqt\setminus L_f$ such that $mn\in L_f$, for otherwise
%$\Fqt^*\setminus L_f$ would be a subgroup of $(\Fqt^*,\cdot)$;
%since $\#(\Fqt^*\setminus L_f)=(q-2)(q^t-1)/(q-1)$ this is not possible.

%So, take $x\in\Fqt\setminus\Fq$, and $m,n\in \Fqt\setminus L_f$ such that $mn\in L_f$.
%Recall from the previous proof that $x\circ y=xy$ if and only if $y\notin L_f$.
%Therefore,
%\[
%  (x\circ m)\circ n=(xm)\circ n=xmn\neq x\circ(mn)=x\circ(m\circ n).
%\]
%\end{proof}

Let $\cB_f=\cB(Q_f)$ and $\ptra_f$ be the planar spread of $Q_f^2$ associated with $Q_f$
and the related translation plane, respectively.

\begin{remark}\label{r:generalizzabile}
The elements of $\cB_f$ distinct from $V_\infty$ are of two types:
\begin{enumerate}
\item
 if $m\in\Fqt\setminus L_f$, then $V_m=\la(1,m)\ra_{\Fqt}$;
\item
  if $m\in L_f$ and $f(h)-mh=0$, $h\neq0$, then
  \[
    V_m=\{(h^{-1}x,h^{-1}f(x))\colon x\in\Fqt\}=h^{-1}U_f.
  \]
\end{enumerate}
In particular, $U_f=V_{f(1)}$.
\end{remark}
Note that $\{V_m\colon m\in L_f\}$ and $\{\la(1,m)\ra_{\Fqt}\colon m\in L_f\}$
are hyper-reguli covering the same vector set.
Their union in the related projective space is a hypersurface \cite{LaShZa15}.

%Below the original definition of a linear set of pseudoregulus type in a projective line given
%in \cite{LuMaPoTr14} is rephrased into an equivalent one in order to obtain $0,1\notin L_f$.
\begin{definition}\cite[Definition 3.1]{LuMaPoTr14}\label{d:pseudo}
Let $s\in\mathbb N^*$ and $\omega\in\Fqt$ such that $(s,t)=1$, and $\N_{q^t/q}(\omega)\neq0,1$.
Any subset of $\PG(1,q^t)$ projectively equivalent to $L_{g_s}$ where $g_s(x)=\omega x^{q^s}$ is 
called an \emph{$\Fq$-linear set of pseudoregulus type}.

\begin{remark}
Note that the definition above has been rephrased in order to obtain $1\notin {L_{g_s}}$.
In fact, $1\in {L_{g_s}}$ would imply that $y\in\F_{q^t}^*$ exists such that 
$\omega y^{q^s-1}=1$, hence $\N_{q^t/q}(\omega)=1$, a contradiction.

The reason for such a choice is that we want the assumptions stated at the beginning of this section 
to be satisfied.
All linear sets of pseudoregulus type in $\PG(1,q^t)$ are projectively equivalent by definition.
In case $L_f$ is a linear set of pseudoregulus type and $0\in L_f$ or $1\in L_f$,
then~\eqref{e:quasicorpo} does not define a quasifield, although $\cB_f$ is still a planar spread.
\end{remark}

\end{definition}
The linear set $L_{g_s}$ is scattered for any $s\in\mathbb N^*$ such that $(s,t)=1$ and 
for any $\omega\in\Fqt^*$. 
Under such assumptions,
$L_{g_s}=\{m\in\Fqt\colon \N_{q^t/q}(m)=\N_{q^t/q}(\omega)\}$. 
If $m\in L_{g_s}$ and $g_s(h)-mh=0$, $h\neq0$, then $m=\omega h^{q^s-1}$, and
$h^{-1}g_s(hx)=x^{q^s}m$.
Therefore, the product in $Q_{g_s}$ is
\[
  x\circ m=\begin{cases} xm&\mbox{ if }\N_{q^t/q}(m)\neq \N_{q^t/q}(\omega),\\
x^{q^s}m&\mbox{ if }\N_{q^t/q}(m)=\N_{q^t/q}(\omega).\end{cases}
\]
If $\N_{q^t/q}(m)=\N_{q^t/q}(\omega)$, then $m=h^{q^s-1}\omega$ implies
\[
V_m=h^{-1}\{(x,x^{q^s}\omega)\colon x\in\Fqt\}.
\]
Therefore, $V_\omega=U_{g_s}$.
For $s=1$, $\cB_{g_s}$ is mapped by the projectivity of matrix 
$\begin{pmatrix}1&0\\ 0&\omega^{-1}\end{pmatrix}$ into the spread used in \cite{LuPo01}
in order to construct an Andr\'e translation plane
which is therefore isomorphic to $\ptra_{\omega x^q}$ (see also Theorem \ref{t:an}).

The plane $\ptra_{g_s}$ associated with the polynomial described in Definition
\ref{d:pseudo} is the Andr\'e 
$q$-plane  obtained by one $q^s$-Andr\'e net replacement.

%The condition $0,1\notin L_f$ for a scattered polynomial $f(x)$ has the sole purpose of defining a 
%quasifield. 
If there is no need for a quasifield framework, one can omit the condition $0,1\notin L_f$ 
and introduce, in view of Remark \ref{r:generalizzabile},
the more general definition of a  translation plane associated to a scattered polynomial
 as follows.

\begin{definition}
Let $f(x)\in\Fqt[x]$ be a scattered $\Fq$-linearized polynomial.
Let $\cB_f$ be the collection of the following subspaces of $\Fqt^2$:
\[
  \la(1,m)\ra_{\Fqt}\     (m\in\Fqt\setminus L_f), \qquad   hU_f   \  (h\in\Fqt^*),
 \qquad\{0\}\times\Fqt.
\]

The \emph{translation plane associated with $f$},   also denoted by  $\ptra_f$, 
is $\ptra_f=\ptra(\cB_f)$.
\end{definition}

%%{\color{green}{}}

\section{Equivalence of the translation planes associated with scattered polynomials}
\label{S.equiv}
In this section $f(x),f'(x)\in\Fqt[x]$ are two scattered $\Fq$-linearized polynomials.
The proof concerning the equivalence between the corresponding translation planes
will be based on the following restriction of the
Andr\'e's isomorphism theorem to the finite case.
\begin{theorem}\cite{An54}\cite[Theorem 1.18]{Kn95}\label{t:an}
  Let $\cB_1$ and $\cB_2$ be planar spreads of $\Fqt^2$, and $\ptra_i=\ptra(\cB_i)$,
  $i=1,2$.
  Assume moreover that $F$ is the kernel of $\ptra_1$.
  Let $\alpha:\Fqt^2\rightarrow\Fqt^2$ be bijective.
  Then $\alpha$ induces an isomorphism between $\ptra_1$ and $\ptra_2$ if and only if
  there exists an $F$-semilinear mapping $\lambda:\Fqt^2\rightarrow\Fqt^2$ and a vector
  $u\in\Fqt^2$ such that $\lambda(\cB_1)=\cB_2$ and $\alpha(v)=\lambda(v)+u$
  for all $v\in\Fqt^2$.
\end{theorem}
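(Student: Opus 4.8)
The plan is to derive the theorem from the classical theory of central collineations of translation planes (as in \cite[Chapter~1]{Kn95}), for which the translation group and the kernel are \emph{intrinsic} invariants. The ``if'' direction is immediate: suppose $\alpha(v)=\lambda(v)+u$ with $\lambda$ an $F$-semilinear map such that $\lambda(\cB_1)=\cB_2$; since $\alpha$ is bijective, so is $\lambda$. Every line $w+V$ of $\ptra_1$ (with $V\in\cB_1$) is carried by $\alpha$ to $(\lambda(w)+u)+\lambda(V)$, a coset of $\lambda(V)\in\cB_2$, hence a line of $\ptra_2$; as $\alpha$ is a bijection of the common point set $\Fqt^2$ sending the line set of $\ptra_1$ injectively into that of $\ptra_2$, and the two affine planes have equally many lines, $\alpha$ is line-preserving in both directions, i.e.\ an isomorphism.

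For the converse I would first normalise. Given an isomorphism induced by $\alpha$, set $u=\alpha(0)$ and $\lambda=\tau\circ\alpha$ with $\tau\colon v\mapsto v-u$ a translation of $\ptra_2$; then $\lambda$ is again an isomorphism, $\lambda(0)=0$, and $\alpha(v)=\lambda(v)+u$, so it remains to show that $\lambda$ is $F$-semilinear and $\lambda(\cB_1)=\cB_2$. Because the lines of $\ptra_i$ through $0$ are exactly the members of $\cB_i$ and $\lambda$ is a bijection fixing $0$, each $V\in\cB_1$ is sent onto a member of $\cB_2$, so $\lambda(\cB_1)=\cB_2$ as families of point sets. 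Additivity of $\lambda$ comes from the translation group: an affine-plane isomorphism takes parallel classes to parallel classes, so $\lambda$ carries the line at infinity $\ell_\infty$ of $\ptra_1$ to that of $\ptra_2$, whence conjugation by $\lambda$ maps the translation group $T_1=\{v\mapsto v+w : w\in\Fqt^2\}$ (the elations with axis $\ell_\infty$) onto $T_2$; for $w\in\Fqt^2$ the map $\lambda\tau_w\lambda^{-1}\in T_2$ sends $0$ to $\lambda(w)$, hence is translation by $\lambda(w)$, and evaluating it at $\lambda(v)$ yields $\lambda(v+w)=\lambda(v)+\lambda(w)$.

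The remaining step is semilinearity, where I would invoke the intrinsic description of the kernel: standard theory realises the kernel $F$ of $\ptra_1$ as a field acting faithfully on $\Fqt^2$ by additive maps, with $F^{*}$ the group of non-trivial collineations of $\ptra_1$ fixing $0$ and every line through $0$ (the $(0,\ell_\infty)$-homologies) and $\cB_1$ an $F$-spread; the kernel $F'$ of $\ptra_2$ is described likewise. Since $\lambda$ fixes $0$ and maps $\ell_\infty$ to $\ell_\infty$, conjugation by $\lambda$ carries the homology group of $\ptra_1$ onto that of $\ptra_2$, and — $\lambda$ being additive — it respects the pointwise addition $(\kappa+\kappa')(v):=\kappa(v)+\kappa'(v)$ as well as composition, hence is a field isomorphism $\sigma\colon F\to F'$. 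Consequently, for $k\in F$ acting as the additive bijection $\kappa$ and for $v\in\Fqt^2$ one gets $\lambda(kv)=\lambda(\kappa(v))=(\lambda\kappa\lambda^{-1})(\lambda(v))=\sigma(k)\,\lambda(v)$, i.e.\ $\lambda$ is $F$-semilinear with companion $\sigma$ (an automorphism once the isomorphic finite fields $F$ and $F'$ are identified), and then $\lambda(\cB_1)=\cB_2$ holds at the level of $F$-subspaces.

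I expect the main obstacle to be exactly this last step: one must pin down the kernel $F$ as a group of collineations of $\ptra_1$ recoverable intrinsically, and verify that conjugation by $\lambda$ preserves its \emph{entire} field structure — not merely the multiplicative group — since it is the additive law of $F$ that upgrades ``$\lambda$ additive and component-wise linear'' to ``$\lambda$ semilinear''. A secondary subtlety worth flagging is that ``isomorphism of translation planes'' has to be read affinely (equivalently, with the translation axis distinguished), so that the image of $\ell_\infty$, and hence of both the translation group and the kernel, is canonical.
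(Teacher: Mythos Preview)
The paper does not give a proof of this theorem: it is stated as a classical result of Andr\'e, with the citation \cite{An54}\cite[Theorem 1.18]{Kn95}, and used as a black box in the proof of Theorem~\ref{t:main}. There is therefore no ``paper's own proof'' to compare against.

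Your argument is the standard one and is correct. Reducing to $\lambda(0)=0$, reading off $\lambda(\cB_1)=\cB_2$ from the lines through $0$, obtaining additivity by conjugating the translation group (the elations with axis $\ell_\infty$), and then obtaining $F$-semilinearity by conjugating the kernel homology group (the $(0,\ell_\infty)$-homologies, which you correctly note carries the full field structure because the already-established additivity of $\lambda$ makes conjugation respect pointwise addition of endomorphisms) is exactly the approach in \cite[Chapter~1]{Kn95}. The caveat you flag at the end --- that one must read ``isomorphism'' affinely so that $\ell_\infty$ is sent to $\ell_\infty$ --- is not an issue here, since the $\ptra_i$ are defined as affine planes.
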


\begin{theorem}\label{t:main}
  Let $q>3$.
  A bijection $\alpha:\Fqt^2\rightarrow\Fqt^2$ is an isomorphism between $\ptra_f$ and $\ptra_{f'}$
  if and only if $\alpha(v)=\lambda(v)+u$ for some $u\in\Fqt^2$ and
  $\lambda$ is an $\Fqt$-semilinear bijective map satisfying $\lambda(U_f)=h'U_{f'}$ with 
  $h'\in\Fqt^*$. 
  Therefore, the planes $\ptra_f$ and $\ptra_{f'}$ are isomorphic if and only if $U_f$ and $U_{f'}$
  belong to the same orbit under the action of $\GaL(2,q^t)$.
\end{theorem}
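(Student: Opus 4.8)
The plan is to use Andr\'e's isomorphism theorem (Theorem \ref{t:an}) as the backbone, and to extract from it the precise shape of an isomorphism between $\ptra_f$ and $\ptra_{f'}$. First I would identify the kernel $F$ of $\ptra_f$. By Proposition \ref{p:quasicorpo}, $K(Q_f)=\Fq$, but one should be careful: when the quasifield framework is unavailable (i.e.\ when $0\in L_f$ or $1\in L_f$), the plane $\ptra_f$ is still defined via the spread $\cB_f$, and we need to know its kernel directly. The kernel of $\ptra_f$ is the largest field $F$ over which every component of $\cB_f$ is an $F$-subspace and the spread is $F$-linear; since every component is an $\Fqt$-subspace, $\Fqt\subseteq F$, and since $F$ must act on the $2t$-dimensional $\Fq$-space $\Fqt^2$ with the components as $F$-subspaces of $F$-dimension equal to half the total, we get $F=\Fqt$ provided $\cB_f$ is not the Desarguesian spread — which is guaranteed because $f$ is scattered and $L_f\neq\emptyset$, so $\cB_f$ genuinely contains components $hU_f$ not of the form $\la(1,m)\ra_{\Fqt}$. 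Hence $F=\Fqt$, and this is where the hypothesis $q>3$ (rather than $q>2$) is likely to be needed, to rule out small exceptional configurations where $\cB_f$ could coincide with a Desarguesian spread or admit a larger kernel.

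Once $F=\Fqt$ is established, Theorem \ref{t:an} says that every isomorphism $\alpha:\ptra_f\to\ptra_{f'}$ has the form $\alpha(v)=\lambda(v)+u$ with $\lambda$ an $\Fqt$-semilinear bijection (i.e.\ $\lambda\in\GaL(2,q^t)$) satisfying $\lambda(\cB_f)=\cB_{f'}$, and conversely any such $\lambda$ yields an isomorphism. So the theorem reduces to the following claim: for $\lambda\in\GaL(2,q^t)$, one has $\lambda(\cB_f)=\cB_{f'}$ if and only if $\lambda(U_f)=h'U_{f'}$ for some $h'\in\Fqt^*$. The ``if'' direction is the easier half: if $\lambda(U_f)=h'U_{f'}$, then $\lambda$ maps the hyper-regulus $\{hU_f:h\in\Fqt^*\}$ onto $\{hU_{f'}:h\in\Fqt^*\}$ (since $\lambda$ is $\Fqt$-semilinear, $\lambda(hU_f)=h^{\sigma}\lambda(U_f)=h^{\sigma}h'U_{f'}$), and since $\lambda$ permutes the full Desarguesian spread $\cD$ it must then also map the complementary set of components $\{\la(1,m)\ra:m\in\Fqt\setminus L_f\}\cup\{\{0\}\times\Fqt\}$ of $\cB_f$ onto the corresponding set for $\cB_{f'}$; one checks the two ``leftover'' spreads have the same underlying point set in $\PG(1,q^t)$, namely the complement of $L_f$ mapped to the complement of $L_{f'}$, and a $\GaL$-map respecting the partition $\cD = (\text{hyper-regulus part})\sqcup(\text{rest})$ automatically matches up both parts.

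The main obstacle is the ``only if'' direction: given $\lambda(\cB_f)=\cB_{f'}$, produce the single component of $\cB_f$ that must map to a scalar multiple of $U_{f'}$. The key structural fact is that the components of $\cB_f$ come in two kinds (Remark \ref{r:generalizzabile}): the $(q^t-1)/(q-1)$ ``replaced'' components $hU_f$, which together with $\cB_{f'}$'s replaced components $h'U_{f'}$ form hyper-reguli, and the ``untouched'' components $\la(1,m)\ra_{\Fqt}$ lying in $\cD$. A $\GaL(2,q^t)$-map sends Desarguesian-spread components to Desarguesian-spread components; so $\lambda$ must send the set of untouched components of $\cB_f$ into $\cB_{f'}\cap\cD$, and by a counting argument (both hyper-reguli have the same size $(q^t-1)/(q-1)$, as does the untouched part) it sends untouched components exactly onto untouched components, hence the replaced components $\{hU_f\}$ exactly onto $\{h'U_{f'}\}$. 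Therefore $\lambda(U_f)$, being one of the $h'U_{f'}$, equals $h'U_{f'}$ for some $h'\in\Fqt^*$, as required. The delicate points to nail down are: (a) that $\lambda(U_f)$ is indeed among the replaced components and is not, say, $V_\infty=\{0\}\times\Fqt$ — here one uses that $\ker f=0$ forces $U_f\cap(\{0\}\times\Fqt)=0$, so $U_f$ is a component of $\cD$ only if it is $\la(1,m)\ra$ for some $m$, contradicting scatteredness; and (b) ruling out the degenerate possibility that the untouched part of $\cB_f$ is empty or that $\cB_f=\cD$, which again is where $f$ scattered and the hypothesis on $q$ enter. Granting the claim, the final sentence of the theorem is immediate: $\ptra_f\cong\ptra_{f'}$ iff some $\lambda\in\GaL(2,q^t)$ satisfies $\lambda(U_f)=h'U_{f'}$; composing $\lambda$ with the $\Fqt$-linear scaling $v\mapsto h'^{-1}v$ (which is in $\GaL(2,q^t)$) shows this holds iff $U_f$ and $U_{f'}$ lie in the same $\GaL(2,q^t)$-orbit.
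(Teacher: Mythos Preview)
Your identification of the kernel is wrong, and this is the heart of the matter. You assert that ``every component is an $\Fqt$-subspace'' and conclude that the kernel $F$ of $\ptra_f$ contains $\Fqt$. But the replaced components $hU_f$ are \emph{not} $\Fqt$-subspaces: $U_f=\{(x,f(x)):x\in\Fqt\}$ is a $t$-dimensional $\Fq$-subspace, and if it were closed under $\Fqt$-scaling it would equal some $\la(1,m)\ra_{\Fqt}$, contradicting that $f$ is scattered (for $t>1$). The kernel of $\ptra_f$ is $\Fq$, exactly as Proposition~\ref{p:quasicorpo} says; consequently Theorem~\ref{t:an} only furnishes $\lambda\in\GaL(2t,q)$, an $\Fq$-semilinear map of the $2t$-dimensional space $\Fqt^2$, with $\lambda(\cB_f)=\cB_{f'}$.

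Once you drop the false assumption, your ``only if'' argument collapses: an arbitrary element of $\GaL(2t,q)$ need not preserve the Desarguesian spread $\cD$, so you cannot simply count to match $\cB_f\setminus\cD$ with $\cB_{f'}\setminus\cD$. The genuine content of the paper's proof is precisely the step you skipped: showing that any $\lambda\in\GaL(2t,q)$ with $\lambda(\cB_f)=\cB_{f'}$ is in fact $\Fqt$-semilinear. The paper does this by finding (using $q>3$) three Desarguesian components of $\cB_f$ whose images under $\lambda$ are again Desarguesian, coordinatising $\lambda$ via two $\Fq$-semilinear maps $\lambda_1,\lambda_2$ of $\Fqt$, and then exploiting enough further Desarguesian components (at least $N=q^t-2(q^t-1)/(q-1)>q^{t-1}$ of them, again requiring $q>3$) to force $\rho(x)=\lambda_1(x)/\lambda_1(1)$ to be multiplicative on an $\Fq$-basis of $\Fqt$ and hence a field automorphism. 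Your ``if'' direction is essentially correct and matches the paper's, but the harder half is missing its main idea.
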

\begin{proof}
Assume that $\ptra_f$ and $\ptra_{f'}$ are isomorphic.
By Theorem \ref{t:an}, a $\lambda\in\GaL(2t,q)$ exists such that $\lambda(\cB_f)=\cB_{f'}$.
There exists an element of $\GaL(2,q^t)$ mapping $U_f$ to $U_g$ where $g(x)$ is a 
scattered linearized polynomial such that $0,1\not\in L_g$.
Such semilinear map transforms $\cB_f$ into $\cB_g$. 
Then Theorem \ref{t:an} and Proposition \ref{p:quasicorpo} imply that the kernel of $\ptra_f$
is $\Fq$.
By Proposition \ref{p:quasicorpo}, $\lambda$ is an $\Fq$-semilinear map, with companion
automorphism $\sigma$, say.
%Let $\cB_1=\cB(Q_f)$ and $\cB_2=\cB(Q_{f'})$.
Both partial spreads $\cB_{f}\cap\cD$ and $\cB_{f'}\cap\cD$ have size $q^t+1-(q^t-1)/(q-1)$, 
and since $q>3$ there
are three distinct elements $X_1$, $X_2$, $X_3$ of 
$\cB_f\cap\cD$ which are mapped by $\lambda$ into elements 
of $\cB_{f'}\cap\cD$. 
%Choose $X_3\neq\la(0,1)\ra_{\Fqt}$.
Correspondingly, there are $v_i,w_i\in\Fqt^2$, $i=1,2,3$, such that $v_3=v_1+v_2$, and
\[
  \la v_i\ra_{\Fqt}\in\cB_f,\quad \lambda(\la v_i\ra_{\Fqt})=\la w_i\ra_{\Fqt},\quad i=1,2,3.
\]
%Let $w_3=(a,b)$, $a\neq0$.
For $i=1,2$ a map $\lambda_i:\,\Fqt\rightarrow\Fqt$ exists such that 
$\lambda(xv_i)=\lambda_i(x)w_i$ for $x\in\Fqt$.
Clearly $\lambda_i$ is an $\Fq$-semilinear map with companion automorphism $\sigma$.
As a consequence, for any $x\in\Fqt$,
\begin{equation}\label{e:coord0}
  \lambda(xv_3)=\lambda_1(x)w_1+\lambda_2(x)w_2\in\la w_3\ra_{\Fqt}.
\end{equation}

The vectors $w_i$, $i=1,2,3$ are pairwise $\Fqt$-linearly independent, hence a $c\in\Fqt^*$
exists such that $\la w_3\ra_{\Fqt}=\la w_1+c w_2\ra_{\Fqt}$.
This implies $\lambda_2(x)=c\lambda_1(x)$ for any $x$, and
\begin{equation}\label{e:coord}
  \lambda(\xi v_1+\eta v_2)=\lambda_1(\xi)w_1+c\lambda_1(\eta)w_2\quad\mbox{for any }\xi,\eta\in\Fqt.
\end{equation}

%\end{proof}
%\end{document}

Assume now that $n\in\Fqt^*$ and $z_n\in(\Fqt^2)^*$ satisfy 
\begin{equation}\label{e:topica}
\la v_1+nv_2\ra_{\Fqt}\in\cB_f,\qquad \lambda(\la v_1+nv_2\ra_{\Fqt})=\la z_n\ra_{\Fqt}.
\end{equation}
By \eqref{e:coord} and \eqref{e:topica}, for any  $x\in\Fqt$, $\lambda_1(x)$ and $\lambda_1(nx)$
are the coordinates of the vector $w(x)=\lambda(x(v_1+nv_2))$ in $\la z_n\ra_{\Fqt}$ with respect to 
the basis  $\{w_1,cw_2\}$.
Since $w(x)$ ranges in a one-dimensional subspace of $\Fqt^2$,
\[
  \begin{vmatrix}\lambda_1(x)&\lambda_1(nx)\\ \lambda_1(1)&\lambda_1(n)\end{vmatrix}=0
\]
for all $x\in\Fqt$.
Hence the $\Fq$-semilinear map $\rho(x)=\lambda_1(x)/\lambda_1(1)$ satisfies the condition
\begin{equation}\label{e:topica2}
\rho(nx)=\rho(n)\rho(x),\quad\mbox{for all }x\in\Fqt.
\end{equation}
%Since precisely $(q^t-1)/(q-1)$ elements of $\cB_f$ do not belong to $\cD$,
%there are at least 
%$q^t-(q^t-1)/(q-1)$ values of $n\in\Fqt$ such that \in\cB_f$;
%at most $(q^t-1)/(q-1)$ of such elements of $\cB_f$ are \textbf{not} mapped by $\lambda$ into 
%elements of $\cD$.
%So, since $q>3$ implies

The elements of $\cB_f$ which are also in the Desarguesian spread are precisely
\[ q^t+1-\frac{q^t-1}{q-1}. \]
All of them except $\la v_2\ra_{\Fqt}$ are of type $\la v_1+nv_2\ra_{\Fqt}$, $n\in\Fqt$.
Since $\lambda$ is a bijection between $\cB_f$ and $\cB_{f'}$ and
$\#(\cB_{f'}\setminus\cD)=(q^t-1)/(q-1)$, the number of elements of type
$\la v_1+nv_2\ra_{\Fqt}$ satisfying \eqref{e:topica} is at least
\[
  N=q^t-2\frac{q^t-1}{q-1},
\]
and $q>3$ implies $N>q^{t-1}$.
Then there exists an $\Fq$-basis $\{n_1,n_2,\ldots,n_t\}$ of $\Fqt$ such that
the condition \eqref{e:topica} is satisfied for any $n=n_i$, $i=1,2,\ldots,t$.
From \eqref{e:topica2}, for any $y,x\in\Fqt$, $y=\sum_{i=1}^ty_in_i$ ($y_i\in\Fq$, $i=1,2,\ldots,t$),
\[
\rho(yx)=\sum_iy_i^\sigma\rho(n_ix)=\sum_iy_i^\sigma\rho(n_i)\rho(x)=\rho(y)\rho(x).
\]
As a consequence, $\rho$ is an automorphism of the field $\Fqt$.
Condition \eqref{e:coord} yields
\[
  \lambda(\xi v_1+\eta v_2)=
  \lambda_1(1)\left(\rho(\xi)w_1+\rho(\eta)cw_2\right)\quad\mbox{for any }\xi,\eta\in\Fqt.
\]
So, $\lambda\in\GaL(2,q^t)$.
Then $\lambda(\cD)=\cD$, and since $\lambda(\cB_f)=\cB_{f'}$, one obtains
$\lambda(\cB_f\setminus\cD)=\cB_{f'}\setminus\cD$.
The elements of $\cB_f\setminus\cD$ are of type $hU_f$, $h\in\Fqt^*$, and the
elements of $\cB_{f'}\setminus\cD$ are of type $h'U_{f'}$, $h'\in\Fqt^*$.
Then an $h'\in\Fqt^*$ exists such that $\lambda(U_f)=h'U_{f'}$.
%Therefore, $U_f$ and $U_{f'}$ belong to the same orbit under the action of $\GaL(2,q^t)$.

Conversely, assume that $\lambda(U_f)=h'U_{f'}$ for some $h'\in\Fqt^*$ and 
$\lambda\in\GaL(2,q^t)$ with companion
automorphism $\sigma$.
This implies $\lambda(hU_f)=h^\sigma h' U_{f'}$ for any $h\in\Fqt^*$, hence
$\lambda(\cB_f\setminus\cD)=\cB_{f'}\setminus\cD$.
If $\la v\ra_{\Fqt}\in\cB_f$, then 
$\la v\ra_{\Fqt}\cap U_f=\{0\}$, and consequently
$\lambda(\la v\ra_{\Fqt})\cap U_{f'}=\{0\}$.
Since the elements of $\cD$ which intersect trivially $U_{f'}$ belong to $\cB_{f'}$,
it follows  $\lambda(\la v\ra_{\Fqt})\in\cB_{f'}$.
As a result, the assumptions of Theorem \ref{t:an} are satisfied and $\alpha$ is an
isomorphism between $\ptra_f$ and $\ptra_{f'}$.
\end{proof}
\begin{corollary}\label{c:nt} Assume $q>3$.
If $\alpha$ is an automorphism of $\ptra_f$, and $\lambda(v)=\alpha(v)-\alpha(0)$
for any $v\in\Fqt^2$, then $\lambda$ maps elements of $\cB_f\cap \cD$ into elements of 
$\cB_f\cap \cD$, and elements of $\cB_f\setminus\cD$ into elements of $\cB_f\setminus\cD$.
\end{corollary}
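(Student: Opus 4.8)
The plan is to deduce Corollary~\ref{c:nt} from Theorem~\ref{t:main} by specializing to the case $f'=f$. If $\alpha$ is an automorphism of $\ptra_f$, then in particular it is an isomorphism $\ptra_f\to\ptra_f$, so Theorem~\ref{t:main} (applied with $q>3$) yields that $\alpha(v)=\lambda(v)+u$ for some $u\in\Fqt^2$ and some $\Fqt$-semilinear bijection $\lambda$ satisfying $\lambda(U_f)=h'U_f$ with $h'\in\Fqt^*$. Setting $v=0$ gives $u=\alpha(0)$, so $\lambda(v)=\alpha(v)-\alpha(0)$, which is exactly the $\lambda$ in the statement. Thus the corollary is really just a reading-off of the structural information that the proof of Theorem~\ref{t:main} already established, namely that $\lambda\in\GaL(2,q^t)$.

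First I would invoke Theorem~\ref{t:main} to fix $\lambda\in\GaL(2,q^t)$ with companion automorphism $\sigma$ and $\lambda(U_f)=h'U_f$. Since $\lambda$ is $\Fqt$-semilinear and bijective, it permutes the members of the Desarguesian spread $\cD$; that is, $\lambda(\cD)=\cD$. Next I would use the characterization of $\cB_f\setminus\cD$ and $\cB_f\cap\cD$ in terms of $U_f$: the elements of $\cB_f$ not in $\cD$ are exactly the subspaces $hU_f$ with $h\in\Fqt^*$, while the elements of $\cB_f\cap\cD$ are exactly those $\la v\ra_{\Fqt}\in\cD$ with $\la v\ra_{\Fqt}\cap U_f=\{0\}$ (as used in the converse part of the proof of Theorem~\ref{t:main}).

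For the first assertion, if $hU_f\in\cB_f\setminus\cD$, then $\lambda(hU_f)=h^\sigma\lambda(U_f)=h^\sigma h'U_f$, which again has the form $h''U_f$ with $h''=h^\sigma h'\in\Fqt^*$; hence $\lambda(hU_f)\in\cB_f\setminus\cD$. Since $\lambda$ is a bijection on the finite set $\cB_f$ that sends $\cB_f\setminus\cD$ into itself, it also sends $\cB_f\cap\cD$ onto $\cB_f\cap\cD$. Alternatively, and more directly for the second assertion: if $\la v\ra_{\Fqt}\in\cB_f\cap\cD$, then $\la v\ra_{\Fqt}\cap U_f=\{0\}$, so applying $\lambda$ and using $\lambda(U_f)=h'U_f$ gives $\lambda(\la v\ra_{\Fqt})\cap h'U_f=\{0\}$, whence $\lambda(\la v\ra_{\Fqt})\cap U_f=\{0\}$ as well; since $\lambda(\la v\ra_{\Fqt})\in\cD$, this forces $\lambda(\la v\ra_{\Fqt})\in\cB_f\cap\cD$. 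I do not anticipate any real obstacle here — the only subtlety is making sure Theorem~\ref{t:main} genuinely applies to an automorphism (it does, an automorphism being an isomorphism of $\ptra_f$ with itself), and keeping track that $\lambda$ obtained from the theorem is the same as $\alpha(\cdot)-\alpha(0)$, which is immediate by evaluating at $0$.
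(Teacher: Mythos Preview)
Your proposal is correct and follows exactly the approach implicit in the paper: the corollary is stated there without proof, as an immediate consequence of Theorem~\ref{t:main}, using that $\lambda\in\GaL(2,q^t)$ implies $\lambda(\cD)=\cD$, whence $\lambda$ preserves both $\cB_f\cap\cD$ and $\cB_f\setminus\cD$.
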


\begin{corollary} Assume $q>3$.
The plane $\ptra_f$ is neither a semifield plane nor a nearfield plane.
\end{corollary}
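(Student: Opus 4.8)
The plan is to show that if $\ptra_f$ were a semifield plane or a nearfield plane, its stabilizer group of collineations fixing the zero vector would be too large to be compatible with Corollary~\ref{c:nt}. Recall that a translation plane coordinatized by a quasifield $Q$ is a semifield plane precisely when the \emph{dual} (or the plane itself, up to dualizing conventions) admits a collineation group acting transitively on the affine points of a line through the translation center other than the line at infinity; concretely, $\ptra_f=\ptra(Q_f)$ is a semifield plane if and only if $Q_f$ can be replaced by a semifield, equivalently the shears group (elations with a common axis, the line $\ell_\infty$) acts transitively on the remaining $q^t$ parallel classes, i.e.\ on $\cB_f\setminus\{V_\infty\}$. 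Such a shears collineation fixes $V_\infty$ and acts on $\Fqt^2$ by $(x,y)\mapsto(x, y+x\circ m)$-type maps, and in particular it is a linear map $\lambda$ fixing $V_\infty\in\cD$ but moving elements of $\cB_f$ around.

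First I would make precise the partition of $\cB_f$ into the part $\cB_f\cap\cD$, which has size $q^t+1-(q^t-1)/(q-1)$, and the part $\cB_f\setminus\cD=\{hU_f:h\in\Fqt^*\}$ of size $(q^t-1)/(q-1)$; by Corollary~\ref{c:nt} every element $\lambda=\alpha-\alpha(0)$ coming from a collineation preserves each of these two sets. Next I would observe that the transitivity required of a semifield plane forces a collineation group that, fixing $V_\infty$, acts transitively on the other $q^t$ members of $\cB_f$; but those other members split into the two $\lambda$-invariant blocks of different sizes $q^t-(q^t-1)/(q-1)$ and $(q^t-1)/(q-1)$, and for $q>3$ and $t>1$ these two sizes are distinct and both nonzero, so no such group can be transitive. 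Hence $\ptra_f$ is not a semifield plane. The nearfield case is handled similarly: a translation plane is a nearfield plane when it is coordinatized by a nearfield, which (for the associated spread) forces a collineation group fixing two members of the spread and acting transitively on the remaining $q^t-1$; again this group is contained in the stabilizer of $\cB_f\cap\cD$ and $\cB_f\setminus\cD$ by Corollary~\ref{c:nt}, and the orbit of any spread element under it stays within one block, so it cannot reach the $q^t-1$ other elements which straddle both blocks. One must only check the two distinguished fixed classes can be taken to include $V_\infty$ and one further class so that the remaining $q^t-1$ still meet both blocks nontrivially, which holds because $\cB_f\setminus\cD$ has size $(q^t-1)/(q-1)\ge q+1\ge 5>1$ and $\cB_f\cap\cD$ minus $V_\infty$ has size $q^t-(q^t-1)/(q-1)\ge 2$.

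The main obstacle I anticipate is pinning down exactly the right group-theoretic characterization of ``semifield plane'' and ``nearfield plane'' in terms of collineations fixing the zero vector and permuting the spread, and making sure the relevant collineations are of the form $\lambda=\alpha-\alpha(0)$ so that Corollary~\ref{c:nt} applies; in particular the dualization conventions (semifield planes are characterized by transitive \emph{elation} groups with axis $\ell_\infty$, which indeed fix $V_\infty$ and permute the remaining classes, and these are additive hence of the required form) need to be stated carefully. A clean alternative, which I would actually prefer to write, avoids collineation groups entirely: a plane $\ptra(Q)$ with $Q$ a semifield or nearfield is $\mathopen{}(V,\ell)$-transitive for suitable $(V,\ell)$, but more elementarily, if $Q_f$ were (isotopic to) a semifield or a nearfield then by Albert's/Knuth's theory all its ``slopes'' $V_m$, $m\neq\infty$, would be equivalent under the kernel-linear collineation group, contradicting that $\cB_f\setminus\{V_\infty\}$ is not a single orbit since it is partitioned by Corollary~\ref{c:nt} into the two unequal nonempty pieces $\cB_f\cap\cD\setminus\{V_\infty\}$ and $\cB_f\setminus\cD$. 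Either way the crux is the size inequality $q^t-\tfrac{q^t-1}{q-1}\neq\tfrac{q^t-1}{q-1}$ together with both quantities being positive for $q>3$, $t>1$, which is an elementary check.
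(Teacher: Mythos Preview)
Your approach is essentially the paper's: invoke the group-theoretic characterization of semifield (resp.\ nearfield) planes as those whose spread admits a collineation group in $\GaL$ fixing one (resp.\ two) element(s) and acting transitively on the rest, then observe that by Corollary~\ref{c:nt} every such collineation preserves the two blocks $\cB_f\cap\cD$ and $\cB_f\setminus\cD$, so the required transitivity is impossible.

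One point to clean up: you write that the fixed element ``can be taken to include $V_\infty$'', but the fixed element $S$ (or pair $S,T$) is determined by the plane, not chosen by you. Fortunately this does not matter and the paper's formulation sidesteps it: for \emph{any} $S$ (resp.\ $S,T$) the set $\cB_f\setminus\{S\}$ (resp.\ $\cB_f\setminus\{S,T\}$) still meets both blocks, since $\#(\cB_f\setminus\cD)=(q^t-1)/(q-1)\ge q+1\ge5$ and $\#(\cB_f\cap\cD)=q^t+1-(q^t-1)/(q-1)\ge3$ for $q>3$, $t\ge2$. Hence some $\lambda$ in the group would have to send an element of $\cB_f\cap\cD$ to one of $\cB_f\setminus\cD$, contradicting Corollary~\ref{c:nt}. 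Your ``alternative'' via isotopy/Albert--Knuth is vaguer than needed; the direct block-counting argument is already complete.
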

\begin{proof}
By Theorem \ref{t:main} it may be assumed again that $0,1\notin L_f$.
If $\ptra=\ptra(\cB)$, $\cB$ a planar spread of $\mathbb V$, is a semifield plane, 
then there is
an $S\in\cB$ and a group contained in $\GaL(\mathbb V,K(\ptra))$ preserving $\cB$ and acting
transitively on $\cB\setminus\{S\}$ \cite{Os77}. 
Similarly, if $\ptra$ is a nearfield plane, 
then there are distinct
$S,T\in\cB$ and a group contained in $\GaL(\mathbb V,K(\ptra))$ preserving $\cB$ and acting
transitively on $\cB\setminus\{S,T\}$ \cite{Os77}. 
%By Corollary \ref{c:nt}, if $m\in L_f$ and $n\in\Fqt\setminus L_f$, then no
%$\lambda\in\GaL(\Fqt^2,\Fq)$ preserving $\cB_f$ maps $V_m$, that does not belong to the
%Desarguesian spread, into $V_n=\la n\ra_{\Fqt}$.
As a consequence, if $\cA_f$ is a semifield or a nearfield plane, a 
$\lambda\in\GaL(\Fqt^2,\Fq)$ preserving $\cB_f$ exists
mapping some element of $\cD$ into an element of $\cB_f\setminus\cD$.
This contradicts Corollary~\ref{c:nt}.
\end{proof}

\begin{definition}\cite{CsMaPo18}
Let $L_U$ be an $\Fq$-linear set of $\PG(1, q^t)$ of maximum rank with
maximum field of linearity $\F_q$; that is, $L_U$ is not an $\F_{q^r}$-linear set for any $r>1$.
Then $L_U$ is said of \emph{$\GaL$-class 
$c=c_\Gamma(L)$} if $c$ is the largest integer
such that there exist $\Fq$-subspaces $U_1, U_2, \ldots, U_c$ of $\Fqt^2$ with 
$L_{U_i} = L_U$ for $i\in\{1, 2, \ldots, c\}$
and there is no $\kappa\in\GaL(2, q^t )$ such that $U_i = \kappa(U_j)$ 
for each $i\neq j$, $i, j\in\{1, 2, \ldots, c\}$.
\end{definition}
\begin{corollary}\label{c:class}
Let $q>3$.
Any scattered $\Fq$-linear set $L_U$ of maximum rank in $\PG(1,q^t)$ gives rise to 
$c_\Gamma(L)$
pairwise nonisomorphic translation planes.
\end{corollary}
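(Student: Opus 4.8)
The plan is to deduce Corollary~\ref{c:class} directly from Theorem~\ref{t:main}. Let $L=L_U$ be a scattered $\Fq$-linear set of maximum rank in $\PG(1,q^t)$ with maximum field of linearity $\Fq$, and let $c=c_\Gamma(L)$. By definition of the $\GaL$-class, there exist $\Fq$-subspaces $U_1,\dots,U_c$ of $\Fqt^2$ with $L_{U_i}=L$ for all $i$, lying in $c$ distinct $\GaL(2,q^t)$-orbits, and no larger such family exists. Since each $L_{U_i}=L$ has maximum rank in $\PG(1,q^t)$, each $U_i$ has dimension $t$ over $\Fq$; and since each is scattered (because $L$ is), up to the $3$-transitivity of $\PGL(2,q^t)$ and $q>2$ we may, as explained at the end of Section~\ref{Notations}, replace each $U_i$ by a subspace in its $\GaL$-orbit of the form $U_{f_i}$ for a scattered $\Fq$-linearized polynomial $f_i(x)\in\Fqt[x]$ with $0,1\notin L_{f_i}$. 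Crucially, applying an element of $\GaL(2,q^t)$ does not change the orbit, so the $U_{f_i}$ still lie in $c$ distinct orbits.

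Next I would invoke Theorem~\ref{t:main}: for scattered linearized polynomials $f_i,f_j$, the translation planes $\ptra_{f_i}$ and $\ptra_{f_j}$ are isomorphic if and only if $U_{f_i}$ and $U_{f_j}$ lie in the same $\GaL(2,q^t)$-orbit. Hence the $c$ planes $\ptra_{f_1},\dots,\ptra_{f_c}$ are pairwise non-isomorphic, which already gives at least $c_\Gamma(L)$ pairwise non-isomorphic translation planes ``arising from $L$''. To make the statement precise and sharp, I would observe conversely that every translation plane associated (via the construction of Section~\ref{s:spaq}, or the more general $\ptra_f=\ptra(\cB_f)$ of the final definition of that section) with a scattered linearized polynomial $g$ satisfying $L_g=L$, up to projective equivalence, arises from some $\Fq$-subspace $U_g$ with $L_{U_g}=L$; by maximality of $c$ in the definition of $\GaL$-class, $U_g$ lies in the same orbit as one of the $U_{f_i}$, so $\ptra_g\cong\ptra_{f_i}$ for that $i$. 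Therefore the number of isomorphism classes of translation planes arising from $L$ is exactly $c_\Gamma(L)$.

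One technical point deserves care and is the main (mild) obstacle: the passage from an arbitrary defining subspace $U$ with $L_U=L$ to a subspace of the form $U_f$ with a \emph{scattered} $f$ and $0,1\notin L_f$. The argument at the end of Section~\ref{Notations} supplies, after a projectivity of $\PG(1,q^t)$, a subspace avoiding $\la(1,0)\ra_{\Fqt}$, $\la(0,1)\ra_{\Fqt}$, $\la(1,1)\ra_{\Fqt}$, hence of the form $U_f$ with $\ker f=\ker(f-\id)=\{0\}$; scatteredness of $f$ is inherited from that of $L_U=L$ by the equivalence recalled after the definition of $U_f$. I should also note that the projectivity used is realized by an element of $\GL(2,q^t)\subseteq\GaL(2,q^t)$, so it preserves the partition into $\GaL(2,q^t)$-orbits — this is exactly what keeps the count of $c$ distinct orbits intact. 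With these remarks in place the proof is a short assembly of Theorem~\ref{t:main} and the definition of $c_\Gamma(L)$.

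\begin{proof}
Write $c=c_\Gamma(L)$ and let $U_1,\dots,U_c$ be $\Fq$-subspaces of $\Fqt^2$ with $L_{U_i}=L$ for every $i$, pairwise inequivalent under $\GaL(2,q^t)$, as furnished by the definition of $\GaL$-class. Each $U_i$ is scattered of $\Fq$-dimension $t$. By the $3$-transitivity of $\PGL(2,q^t)$ on $\PG(1,q^t)$ and $q>3>2$, for each $i$ there is $\kappa_i\in\GL(2,q^t)$ such that $\kappa_i(U_i)$ avoids the three points $\la(1,0)\ra_{\Fqt}$, $\la(0,1)\ra_{\Fqt}$, $\la(1,1)\ra_{\Fqt}$; thus $\kappa_i(U_i)=U_{f_i}$ for a linearized polynomial $f_i(x)\in\Fqt[x]$ with $\ker f_i=\{0\}=\ker(f_i-\id)$, i.e.\ $0,1\notin L_{f_i}$. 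Since $U_i$ is scattered, so is $U_{f_i}$, hence $f_i$ is a scattered linearized polynomial. As $\kappa_i\in\GL(2,q^t)\subseteq\GaL(2,q^t)$, the subspaces $U_{f_1},\dots,U_{f_c}$ still lie in $c$ pairwise distinct $\GaL(2,q^t)$-orbits.

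By Theorem~\ref{t:main}, for $i\neq j$ the planes $\ptra_{f_i}$ and $\ptra_{f_j}$ are isomorphic if and only if $U_{f_i}$ and $U_{f_j}$ belong to the same $\GaL(2,q^t)$-orbit, which is not the case; hence $\ptra_{f_1},\dots,\ptra_{f_c}$ are pairwise non-isomorphic. This produces $c_\Gamma(L)=c$ pairwise non-isomorphic translation planes arising from $L$.

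Finally, this number cannot be exceeded. Any translation plane obtained from $L$ by the construction of Section~\ref{s:spaq} is, up to isomorphism, of the form $\ptra_g$ for a scattered linearized polynomial $g$ with $L_g=L$ (after a projectivity, realized in $\GL(2,q^t)$, normalizing so that $0,1\notin L_g$). The associated subspace $U_g$ satisfies $L_{U_g}=L$; by the maximality in the definition of $c_\Gamma(L)$, the family $U_{f_1},\dots,U_{f_c},U_g$ cannot consist of $c+1$ pairwise inequivalent subspaces, so $U_g$ lies in the $\GaL(2,q^t)$-orbit of some $U_{f_i}$. Applying Theorem~\ref{t:main} once more, $\ptra_g\cong\ptra_{f_i}$. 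Therefore the isomorphism classes of translation planes arising from $L$ are exactly $c_\Gamma(L)$ in number.
\end{proof}
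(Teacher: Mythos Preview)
Your proposal is correct and follows exactly the approach the paper intends: the corollary is stated without proof because it is immediate from Theorem~\ref{t:main} together with the definition of the $\GaL$-class, and your write-up simply spells this out. The only cosmetic point is that after applying $\kappa_i\in\GL(2,q^t)$ the linear set $L_{f_i}$ need not literally equal $L$, so your phrase ``with $L_g=L$'' in the last paragraph is slightly off; but since the argument only uses that $U_g$ stays in the same $\GaL(2,q^t)$-orbit, the reasoning is unaffected.
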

As a consequence of the results in \cite{CsZa16}, the linear set of pseudoregulus type in
$\PG(1,q^t)$ is of $\GaL$-class $\phi(t)/2$, where $\phi(t)$ is the totient function.
The sets $U_{x^{q^s}}$ and $U_{x^{q^{s'}}}$ are in a common orbit under the
action of $\GaL(2,q^t)$ if and only if $s\equiv \pm s'\Mod t$.
This implies the following result.
\begin{theorem}\label{th:pseudoreg}
Let $q>3$.
If $f(x)= x^{q^s}$, $f'(x)= x^{q^{s'}}$ are polynomials in $\Fqt[x]$,
and $(s,t)=1=(s',t)$, then $\ptra_f$ and $\ptra_{f'}$
are isomorphic if and only if $s\equiv \pm s'\Mod t$.
\end{theorem}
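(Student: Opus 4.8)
The plan is to derive Theorem \ref{th:pseudoreg} as a direct corollary of Theorem \ref{t:main} together with the already-stated facts about the $\GaL$-orbits of the subspaces $U_{x^{q^s}}$. Since $(s,t)=1$, the polynomial $f(x)=x^{q^s}$ is scattered and $L_f$ is of pseudoregulus type; moreover $0,1\notin L_f$ because $\N_{q^t/q}(1)=1$ while the nonhomogeneous coordinates of $L_f$ are the elements of norm $\N_{q^t/q}(1)=1$... wait—more carefully, for $f(x)=x^{q^s}$ one has $f(x)/x=x^{q^s-1}$, whose image is exactly the set of elements of norm $1$, which does contain $1$; but this does not obstruct the argument, since Theorem \ref{t:main} is stated for arbitrary scattered $f,f'$ and its conclusion is purely about the $\GaL(2,q^t)$-orbits of $U_f,U_{f'}$. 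So the first step is simply to invoke Theorem \ref{t:main}: $\ptra_f\cong\ptra_{f'}$ if and only if $U_f$ and $U_{f'}$ lie in the same $\GaL(2,q^t)$-orbit.

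The second step is to quote the orbit criterion stated just before the theorem: $U_{x^{q^s}}$ and $U_{x^{q^{s'}}}$ are in a common $\GaL(2,q^t)$-orbit if and only if $s\equiv\pm s'\Mod t$. Combining this with Step 1 yields the claimed equivalence immediately. Thus the proof is essentially a two-line concatenation; I would write it as: ``By Theorem \ref{t:main}, $\ptra_f$ and $\ptra_{f'}$ are isomorphic if and only if $U_f=U_{x^{q^s}}$ and $U_{f'}=U_{x^{q^{s'}}}$ belong to the same orbit under $\GaL(2,q^t)$, which by the remark preceding the statement happens if and only if $s\equiv\pm s'\Mod t$.''

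The only genuine content that must be justified somewhere is the orbit criterion itself, i.e.\ that $\GaL(2,q^t)$ identifies $U_{x^{q^s}}$ with $U_{x^{q^{s'}}}$ precisely when $s\equiv\pm s'\pmod t$. The ``if'' direction is constructive: for $s'=s$ it is trivial, and for $s'\equiv -s\pmod t$ one checks that the linear map swapping the two coordinates (i.e.\ $(x,y)\mapsto(y,x)$, or a suitable diagonal twist of it) composed with the Frobenius $\phi:(x,y)\mapsto(x^{q^s},y^{q^s})$ carries $U_{x^{q^s}}=\{(x,x^{q^s})\}$ onto $\{(x^{q^s},x)\}=\{(y,y^{q^{t-s}})\}=U_{x^{q^{t-s}}}$; since $(s,t)=1$ the exponent $q^s$ is a generator of the relevant cyclic situation, so no further values of $s'$ arise. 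The ``only if'' direction is exactly the content cited from \cite{CsZa16} on the $\GaL$-class of the pseudoregulus linear set (which has class $\phi(t)/2$, matching the number of residues $\pm s$): a $\GaL(2,q^t)$-equivalence $U_{x^{q^s}}\to U_{x^{q^{s'}}}$ would, after projecting to $\PG(1,q^t)$, preserve the pseudoregulus linear set and its two distinguished transversal points, forcing the companion automorphism to intertwine $x\mapsto x^{q^s}$ with $x\mapsto x^{q^{s'}}$ and hence $s'\equiv\pm s\pmod t$.

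I do not expect a real obstacle: the theorem is packaged so that all the work has been done upstream (Theorem \ref{t:main} and the cited orbit result). The one point requiring a little care in the writeup is the mild clash with the standing hypothesis $0,1\notin L_f$ of Section \ref{s:spaq}: here $1\in L_{x^{q^s}}$, so one should note explicitly that Theorem \ref{t:main}, although proved under that convention via an initial reduction by an element of $\GaL(2,q^t)$, applies to $\ptra_f$ as defined by the final Definition of Section \ref{s:spaq} (which drops the condition $0,1\notin L_f$), because its statement and its conclusion are invariant under the $\GaL(2,q^t)$-reduction. With that remark in place the proof is complete.
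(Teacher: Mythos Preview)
Your proposal is correct and is exactly the approach the paper takes: the theorem is presented as an immediate consequence of Theorem~\ref{t:main} together with the orbit criterion for $U_{x^{q^s}}$ quoted from \cite{CsZa16} in the paragraph preceding the statement. Your additional remarks justifying the orbit criterion and handling the $1\in L_{x^{q^s}}$ issue are accurate and go slightly beyond what the paper spells out.
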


\begin{remark}
The main argument in the proof of Theorem \ref{t:main} can be generalized to obtain
the following result.
\begin{theorem}
Let $\cB_1$ and $\cB_2$ be planar spreads of $V(\Fqt^2,\Fq)$,
obtained with $\ell_1$ and $\ell_2$ hyper-reguli replacements from the Desarguesian spread $\cD$,
respectively.
Assume $\ell_1+\ell_2\le q-2$.
If $\lambda\in\GaL(2t,q)$ satisfies $\lambda(\cB_1)=\cB_2$, then $\lambda$ is $\Fqt$-semilinear.
\end{theorem}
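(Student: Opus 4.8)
The plan is to mimic, essentially verbatim, the structure of the proof of Theorem~\ref{t:main}, but to track carefully how the bound $\ell_1+\ell_2\le q-2$ replaces the hypothesis $q>3$. Write $\cB_1=(\cD\setminus\bigcup_j R_j)\cup\bigcup_j R_j'$ where $R_1,\dots,R_{\ell_1}$ are the replaced hyper-reguli (each of size $(q^t-1)/(q-1)$) and $R_j'$ their replacements; similarly $\cB_2$ involves $\ell_2$ hyper-reguli. The first step is a counting argument: the elements of $\cB_1$ lying in $\cD$ number $q^t+1-\ell_1(q^t-1)/(q-1)$, and of these, the number that $\lambda$ maps \emph{into} $\cD$ (equivalently, into $\cB_2\cap\cD$, since any element of $\cD$ meeting every $R_k'$ trivially lies in $\cB_2$) is at least $q^t+1-(\ell_1+\ell_2)(q^t-1)/(q-1)$. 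The hypothesis $\ell_1+\ell_2\le q-2$ is exactly what guarantees this quantity is positive and, more importantly, large: it should give a count exceeding $q^{t-1}+1$ or so, which is what is needed below to extract an $\Fq$-basis of $\Fqt$.

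Next I would fix three such elements $\la v_1\ra_{\Fqt},\la v_2\ra_{\Fqt},\la v_3\ra_{\Fqt}$ of $\cB_1\cap\cD$ with $v_3=v_1+v_2$ mapped by $\lambda$ into $\cD$ — this requires at least three survivors among the lines through a common point structure, which again follows from the counting bound once $\ell_1+\ell_2\le q-2$ (one checks $q^t+1-(\ell_1+\ell_2)(q^t-1)/(q-1)\ge 3$ for $q\ge 3$, $t\ge2$). Then, following \eqref{e:coord0}–\eqref{e:coord}, define the $\Fq$-semilinear maps $\lambda_i$ with $\lambda(xv_i)=\lambda_i(x)w_i$, deduce $\lambda_2=c\lambda_1$ for a suitable $c\in\Fqt^*$, and set $\rho(x)=\lambda_1(x)/\lambda_1(1)$. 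For each $n\in\Fqt^*$ such that $\la v_1+nv_2\ra_{\Fqt}\in\cB_1$ is mapped by $\lambda$ into $\cD$, the same $2\times2$ determinant vanishing as in the original proof yields $\rho(nx)=\rho(n)\rho(x)$ for all $x$. The number of such $n$ is at least $q^t-1-(\ell_1+\ell_2)(q^t-1)/(q-1)$ (discarding $\la v_2\ra_{\Fqt}$), and $\ell_1+\ell_2\le q-2$ forces this to exceed $q^{t-1}$; hence the corresponding $n$'s span $\Fqt$ over $\Fq$, and the multiplicativity relation propagates to all of $\Fqt$ by $\Fq$-semilinearity of $\rho$. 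Therefore $\rho$ is a field automorphism, \eqref{e:coord} shows $\lambda$ acts $\Fqt$-semilinearly on $\la v_1,v_2\ra_{\Fqt}=\Fqt^2$, and we are done.

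The main obstacle, and the place where care is genuinely needed rather than copy-paste, is verifying that the single inequality $\ell_1+\ell_2\le q-2$ simultaneously delivers all three numerical facts used above: (i) at least three lines of $\cB_1\cap\cD$ through the relevant configuration survive into $\cD$ under $\lambda$; (ii) the surviving $n$'s number more than $q^{t-1}$, so they span; and (iii) nothing degenerates when $\ell_1=0$ or $\ell_2=0$ (the $\Fqt$-semilinear case is then immediate but should be noted). Concretely one must check $q^t-1-(\ell_1+\ell_2)(q^t-1)/(q-1)>q^{t-1}$, i.e.\ $(q-1-\ell_1-\ell_2)(q^t-1)/(q-1)>q^{t-1}$, which under $\ell_1+\ell_2\le q-2$ reduces to $(q^t-1)/(q-1)>q^{t-1}$ — true since $(q^t-1)/(q-1)=q^{t-1}+\dots+1$. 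A secondary subtlety is the claim that every element of $\cD$ meeting all replacement subspaces $R_k'$ (for $\cB_2$) in the zero subspace necessarily belongs to $\cB_2$; this is exactly the net-replacement setup recalled in Section~\ref{Notations} (the replaced and replacing families cover the same vector set), and it is what lets us conclude ``mapped into $\cD$'' $\Rightarrow$ ``mapped into $\cB_2$''. Once these bookkeeping points are pinned down, the rest is the argument of Theorem~\ref{t:main} with $2(q^t-1)/(q-1)$ replaced by $(\ell_1+\ell_2)(q^t-1)/(q-1)$ throughout.
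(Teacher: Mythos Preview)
Your proposal is correct and follows precisely the approach the paper intends: the theorem is stated in a remark saying ``the main argument in the proof of Theorem~\ref{t:main} can be generalized,'' and your adaptation---replacing the two occurrences of $(q^t-1)/(q-1)$ in the counts by $\ell_1(q^t-1)/(q-1)$ and $\ell_2(q^t-1)/(q-1)$ respectively, and checking that $\ell_1+\ell_2\le q-2$ forces more than $q^{t-1}$ surviving values of $n$---is exactly what is required. One small point: your ``secondary subtlety'' is unnecessary, since $\lambda(V)\in\cB_2$ is automatic from $\lambda(\cB_1)=\cB_2$, so ``$\lambda(V)\in\cD$'' and ``$\lambda(V)\in\cB_2\cap\cD$'' are trivially equivalent without invoking net replacement.
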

\end{remark}

\section{Translation planes associated with the Lunardon-Polverino polynomials}\label{s:LP}

The \emph{Lunardon-Polverino polynomial}, or \emph{LP-polynomial}, of indices $b\in\Fqt^*$ and
$s\in\mathbb N$ is
\[
P_{b,s}=x^{q^s}+x^{q^{t-s}}b.
\]
Such a polynomial is scattered if \cite{LuPo01} and only if \cite{Za19} $\N_{q^t/q}(b)\neq1$
and $(s,t)=1$.
In this case, for $t>3$ the related scattered linear set is not of pseudoregulus type 
\cite{LuPo01}.
\begin{theorem}\cite[Theorem 4.1]{JhJo08}
Consider the following set of subspaces of $V(\Fqt^2,\Fq)$:
\begin{equation}\label{e:fhr}
\mathcal{H}_{b,s}= \left \{  V_{b,d,s} \colon d\in\Fqt^*\right\},
\end{equation}
where
\begin{equation}\label{e:fhrsub}
  V_{b,d,s}=\left\{(x,x^{q^s}d^{1-q^s}+x^{q^{t-s}}d^{1-q^{t-s}}b)\colon x\in\Fqt \right\}
\end{equation}
and  $b^{(q^t-1)/(q^{(s,t)}-1)}$ is not equal to 1.

Then 
\begin{enumerate}[(1)]
\item
this set is a replacement set for a hyper-regulus of $V(\Fqt^2,\mathbb F_{q^{(s,t)}})$;
\item
$V_{b,d,s}$, $d \in \F_{q^{(s,t)}}$, is an $\mathbb F_{q^{(s,t)}}$-subspace and lies over the Desarguesian spread
in the sense that the subspace intersects exactly $(q^t-1)/(q^{(s,t)}-1)$ components in
1-dimensional $\mathbb F_{q^{(s,t)}}$-subspaces;
\item
if $t/(s,t)>3$, the hyper-regulus of components that this subspace lies over is not an Andr\'e
hyper-regulus.
\end{enumerate}
\end{theorem}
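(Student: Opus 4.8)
The plan is to recognize $\cH_{b,s}$ as the hyper-regulus $\{h\,U_f\colon h\in\Fqt^*\}$ of Section~\ref{s:spaq} attached to the Lunardon--Polverino polynomial $f=P_{b,s}$, but read over the subfield $\F_{q^e}$ with $e=(s,t)$, and then to read off (1)--(3) from the scattered property of the associated linear set. Throughout set $Q=q^e$, $n=t/e$, and $s=es'$ with $(s',n)=1$; note $e\mid s$ and $e\mid t-s$, and $\F_{Q^n}=\Fqt$.

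First I would substitute $x=dy$ in \eqref{e:fhrsub}. Collecting exponents, $(dy)^{q^s}d^{1-q^s}=d\,y^{q^s}$ and $(dy)^{q^{t-s}}d^{1-q^{t-s}}b=d\,b\,y^{q^{t-s}}$, so that
\[
V_{b,d,s}=\bigl\{\bigl(dy,\ d(y^{q^s}+b\,y^{q^{t-s}})\bigr)\colon y\in\Fqt\bigr\}=d\,U_f,\qquad f(x)=P_{b,s}(x).
\]
Since $x\mapsto x^{q^s}$ and $x\mapsto x^{q^{t-s}}$ are $\F_Q$-linear on $\Fqt$, $f$ is an $\F_Q$-linearized polynomial, so $U_f$ is an $\F_Q$-subspace and $\lambda U_f=U_f$ for $\lambda\in\F_Q^*$; hence every $V_{b,d,s}=d\,U_f$ is an $\F_Q$-subspace and $V_{b,\lambda d,s}=V_{b,d,s}$. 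Read over $\F_Q$, $f(x)=x^{Q^{s'}}+b\,x^{Q^{n-s'}}$ is an LP-polynomial of indices $b,s'$ with $(s',n)=1$ and $\N_{q^t/q^e}(b)=b^{(q^t-1)/(q^e-1)}\neq1$; so, invoking \cite{LuPo01} with $q$ replaced by $Q$, $f$ is scattered. Thus $U_f$ is a scattered $\F_Q$-subspace of the $2n$-dimensional space $V(\Fqt^2,\F_Q)$, necessarily of maximum rank $n$, and $L_f:=L_{U_f}$ has $(q^t-1)/(q^e-1)$ points. The scattered condition --- $zf(y)=yf(z)$ forces $y,z$ to be $\F_Q$-dependent --- then gives $U_f\cap c\,U_f=\{0\}$ for every $c\in\Fqt\setminus\F_Q$: from $(y,f(y))=c(z,f(z))$ one gets $y=cz$ and $f(y)z-f(z)y=0$. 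Combined with the $\F_Q$-linearity of $U_f$, this yields $V_{b,d,s}=V_{b,d',s}$ exactly when $d/d'\in\F_Q^*$, so $\#\cH_{b,s}=(q^t-1)/(q^e-1)$.

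With this in place the three assertions follow almost formally. For (2): being scattered, $U_f$ meets every component of $\cD$ in an $\F_Q$-subspace of dimension $\le1$; distributing its $q^t-1$ non-zero vectors in groups of $q^e-1$ over the components it meets non-trivially shows that $U_f$ --- and hence every $d\,U_f$, which meets the same components --- lies over exactly $(q^t-1)/(q^e-1)$ components of $\cD$, each in a $1$-dimensional $\F_Q$-subspace. For (1): $\cH_{b,s}$ is a family of $(q^t-1)/(q^e-1)$ pairwise complementary $n$-dimensional $\F_Q$-subspaces of $V(\Fqt^2,\F_Q)$ (at least three once $n\ge2$), hence a hyper-regulus; a non-zero vector $d(h,f(h))$ of $d\,U_f$ (here $h\neq0$, as $f(0)=0$) lies on the component $\la(1,f(h)/h)\ra_{\Fqt}$ with $f(h)/h\in L_f$, and conversely $(\mu,\mu m)=(\mu/h)(h,f(h))\in(\mu/h)U_f$ whenever $m=f(h)/h\in L_f$; therefore
\[
\bigcup_{d\in\Fqt^*}V_{b,d,s}=\bigcup_{m\in L_f}\la(1,m)\ra_{\Fqt}.
\]
The right-hand side is a $(q^t-1)/(q^e-1)$-subset of $\cD$, hence a hyper-regulus of $V(\Fqt^2,\F_Q)$, of which $\cH_{b,s}$ is a (distinct) replacement set --- the intersections $\la(1,m)\ra_{\Fqt}\cap d\,U_f$ being $1$-dimensional over $\F_Q$, as \cite{JhJo08} requires; this is exactly the construction recalled in Section~\ref{s:spaq}, performed over $\F_Q$. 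For (3): an André $q^e$-net has linear set $\{m\colon\N_{q^t/q^e}(m)=\xi\}$, $\xi\in\F_Q^*$, which is projectively equivalent to an $\F_Q$-linear set of pseudoregulus type; so if $\{\la(1,m)\ra_{\Fqt}\colon m\in L_f\}$ were an André hyper-regulus, $L_f$ would be of pseudoregulus type over $\F_Q$, contradicting \cite{LuPo01} (again read over $\F_Q$) once $n=t/(s,t)>3$.

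I expect the only step requiring genuine care, rather than routine checking, to be the change of base field from $\Fq$ to $\F_{q^e}$: one must confirm that $P_{b,s}$ really is an LP-polynomial over $\F_{q^e}$ with coprime index $s'=s/e$ and norm condition $\N_{q^t/q^e}(b)\neq1$, so that the scatteredness and the ``not of pseudoregulus type'' results of \cite{LuPo01} may be applied in the correct field --- over $\Fq$ itself $P_{b,s}$ fails to be scattered once $e>1$. The small ranges $t/(s,t)\le3$ are either immediate (parts (1) and (2), under the harmless assumption $n\ge2$) or vacuous (part (3)).
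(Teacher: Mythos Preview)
The paper does not give a proof of this theorem; it is quoted verbatim as \cite[Theorem~4.1]{JhJo08} and used as input for Section~\ref{s:LP}. So there is no ``paper's own proof'' to compare against. That said, your argument is correct and is in fact the natural proof \emph{in the paper's own language}: the identity $V_{b,d,s}=d\,U_{P_{b,s}}$ that you obtain by the substitution $x=dy$ is precisely the observation the paper records immediately after stating the theorem (there only for $(s,t)=1$). Your contribution is to carry this through for arbitrary $(s,t)$ by changing the ground field to $\F_{q^e}$, $e=(s,t)$, over which $P_{b,s}$ becomes the LP-polynomial $x^{Q^{s'}}+bx^{Q^{n-s'}}$ with $(s',n)=1$ and $\N_{Q^n/Q}(b)\neq1$, so that the scatteredness result and the ``not of pseudoregulus type for $n>3$'' result of \cite{LuPo01} apply. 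Parts (1) and (2) then fall out of the general machinery of Section~\ref{s:spaq} (scattered $\Rightarrow$ the $d\,U_f$ form a hyper-regulus replacing $\{\la(1,m)\ra_{\Fqt}:m\in L_f\}$, with each intersection a $1$-dimensional $\F_Q$-subspace), and part (3) reduces to the fact that an Andr\'e $Q$-net is exactly a norm fibre, i.e.\ a linear set of pseudoregulus type over $\F_Q$.

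Compared with whatever direct computation \cite{JhJo08} performs, your route is more conceptual and stays entirely within the scattered-linear-set framework the present paper is built on; the only external inputs are the two facts about LP linear sets already attributed in the paper to \cite{LuPo01}. The one point worth flagging is that ``Andr\'e hyper-regulus'' in \cite{JhJo08} must be read as an Andr\'e $\F_{q^e}$-net inside $V(\Fqt^2,\F_{q^e})$; you do this implicitly, and it is the correct reading, but it is the only place where a mismatch of conventions could derail the argument.
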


\begin{definition}\cite[\textit{Definition 4.2 \& Corollary 4.3}]{JhJo08}
Any hyper-regulus contained in the Desarguesian spread of $V(\Fqt^2,\mathbb F_{q^{(s,t)}})$
that has a replacement set of the form \eqref{e:fhr} shall be called \emph{fundamental hyper-regulus}.
\end{definition}

When $(s,t)=1$,  the hyper-regulus associated with
a scattered LP-polynomial turns out to be a fundamental hyper-regulus.
As a matter of fact,
\[
V_{b,d,s}=d\{(x,x^{q^s}+bx^{q^{t-s}})\colon x\in\Fqt\}=dU_{P_{b,s}}.
\]

As a consequence, the results in \cite{JhJo08} in particular describe the planes
$\ptra_{P_{b,s}}$ with $\N_{q^t/q}(b)\neq1$ and $(s,t)=1$.
In the next theorem:
\begin{itemize}
\item [-] an \emph{affine central collineation} is a collineation of the affine plane whose projective
extension is a central collineation, and whose axis is an affine line;
\item [-] the \emph{kernel homology group of the associated Desarguesian plane} contains all
collineations of type $(x,y)\mapsto(ax,ay)$ with $a\in\Fqt^*$; note that such maps are collineations
but not central collineations of $\ptra_{P_{b,s}}$;
\item [-] a \emph{coaxis} of an affine central collineation is any affine line through the 
center;
\item [-] given two groups $G$ and $H$ of affine homologies and two lines
$\ell$, $m$, if $\ell$ is axis of any element in $G$ and coaxis of any element in $H$,
and furthermore $m$ is axis of any element in $H$ and coaxis of any element in $G$, then
$G$ and $H$ are called \emph{symmetric affine homology groups};
\item [-] an \emph{elation} of an affine translation plane is intended to have proper axis.
\end{itemize}

\begin{theorem}\cite[Theorem 6.1]{JhJo08}\label{t:jj}
Assume that $q > 3$, $\N_{q^t/q}(b)\neq0,1$ and $(s,t)=1$. Then the following hold:
\begin{enumerate}[(1)]
\item
If $t>3$ is odd, then the plane $\ptra_{P_{b,s}}$ admits no affine central collineation
group and the full collineation group in $\GL(2, q^t )$ has order $(q^t - 1)$ and is the
kernel homology group of the associated Desarguesian plane.

\item
If $t>3$ is even,  then the plane $\ptra_{P_{b,s}}$ admits symmetric affine homology
groups of order $q + 1$ but admits no elation group. 
The full collineation group
in $\GL(2, q^t )$ has order $(q + 1)(q^t - 1)$, and is the direct product of the kernel
homology group of order $(q^t - 1)$ by a homology group of order $q + 1$.
\end{enumerate}
\end{theorem}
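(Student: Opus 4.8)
The plan is to derive Theorem~\ref{t:jj} from Theorem~\ref{t:main} applied with $f'=f$: that result describes the collineations of $\ptra_f$ and thereby reduces everything to computing the stabiliser of $U_{P_{b,s}}$ in $\GL(2,q^t)$ and then reading off the geometric type of its elements. Set $f=P_{b,s}$; after conjugating by a projectivity we may assume $0,1\notin L_f$, which changes neither $\ptra_f$ up to isomorphism nor its collineation group up to conjugacy. By Theorem~\ref{t:main}, a bijection $\alpha$ is an automorphism of $\ptra_f$ exactly when $\alpha(v)=\lambda(v)+u$ with $u\in\Fqt^2$ and $\lambda\in\GaL(2,q^t)$ satisfying $\lambda(U_f)=h'U_f$ for some $h'\in\Fqt^*$. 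Restricting to the $\Fqt$-linear collineations and composing with $v\mapsto(h')^{-1}v$, the collineation group $G$ of $\ptra_f$ lying in $\GL(2,q^t)$ is $G=NS$, where $N=\{v\mapsto cv\colon c\in\Fqt^*\}$ is the kernel homology group of the associated Desarguesian plane and $S=\{\lambda\in\GL(2,q^t)\colon\lambda(U_f)=U_f\}$; here $S\le G$ because an $\Fqt$-linear map fixing $U_f$ fixes every $hU_f$, hence also $\cD$ and $\cB_f\cap\cD$, hence $\cB_f$. Comparing coefficients in the identity $cf(x)=f(cx)$ gives $N\cap S=\{v\mapsto cv\colon c^{q^s}=c\}=\Fq^*$, so $\#G=(q^t-1)\,\#S/(q-1)$, and the whole statement will follow once $\#S$ and the structure of $G$ are pinned down.

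To compute $S$ I would write $\lambda$ as a matrix $\begin{pmatrix}\alpha&\beta\\\gamma&\delta\end{pmatrix}$ over $\Fqt$ and impose $\gamma x+\delta f(x)=f(\alpha x+\beta f(x))$ for all $x\in\Fqt$, with $f(x)=x^{q^s}+b\,x^{q^{t-s}}$. Expanding the right-hand side and reducing exponents modulo $t$ via $x^{q^t}=x$ produces a linearised polynomial supported on $x^{q^0},x^{q^s},x^{q^{t-s}},x^{q^{2s}},x^{q^{t-2s}}$. Since $(s,t)=1$ and $t>3$, as long as $t\ne4$ the exponent $q^{2s}$ is distinct from all the others, so the coefficient of $x^{q^{2s}}$ (which is $0$ on the left) forces $\beta=0$; then the remaining coefficients give $\gamma=0$ and $\delta=\alpha^{q^s}=\alpha^{q^{t-s}}$, whence $\alpha\in\F_{q^{\gcd(2s,t)}}^*=\F_{q^{\gcd(2,t)}}^*$, i.e.\ $S=\{\mathrm{diag}(\alpha,\alpha^{q^s})\colon\alpha\in\F_{q^{\gcd(2,t)}}^*\}$. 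Thus $\#S=q-1$ for $t$ odd and $\#S=q^2-1$ for $t$ even, giving $\#G=q^t-1$ and $\#G=(q+1)(q^t-1)$ respectively. The case $t=4$ must be handled apart, for then $q^{2s}$ and $q^{t-2s}$ coincide and comparing that coefficient only yields $\beta^{q^s}=-b^{1+q^{t-s}}\beta^{q^{t-s}}$; here I would note that the homomorphism $\beta\mapsto\beta^{q^s-q^{t-s}}$ of $\Fqt^*$ has image the norm-one subgroup for $\F_{q^4}/\F_{q^2}$ and that $-b^{1+q^{t-s}}$ lies in it precisely when $\N_{q^t/q}(b)=1$, so the hypothesis $\N_{q^t/q}(b)\ne1$ again forces $\beta=0$ and the conclusion is unchanged.

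It then remains to read off the structure of $G=NS$, working in the fixed basis, where every element of $G$ is diagonal. A nontrivial diagonal map fixes a line of $\ptra_f$ pointwise only if it is of type $\mathrm{diag}(1,\mu)$ or $\mathrm{diag}(\nu,1)$, and such maps occur in $G$ only when $t$ is even (they are extracted from $S$ modulo $N$, and then $\mu^{q+1}=\nu^{q+1}=1$); note also that a scalar $v\mapsto cv$ with $c\notin\Fq^*$ fixes no line of $\ptra_f$ pointwise. For $t$ odd this yields part~(1): $G$ consists exactly of the kernel homologies of the associated Desarguesian plane and admits no affine central collineation. For $t$ even, $H_1=\{\mathrm{diag}(1,\mu)\colon\mu^{q+1}=1\}$ and $H_2=\{\mathrm{diag}(\nu,1)\colon\nu^{q+1}=1\}$ are homology groups of order $q+1$ whose axes are the components $\la(1,0)\ra_{\Fqt}$ and $\{0\}\times\Fqt$, each being a coaxis of the other in the sense of the excerpt, hence symmetric; since every element of $G$ is diagonalisable while the linear part of a nontrivial elation would be a nontrivial unipotent map, $\ptra_f$ has no elation; and from $N\cap H_1=\{1\}$ and $\#N\cdot\#H_1=\#G$ one gets $G=N\times H_1$, which is part~(2). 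I expect the main obstacle to be the bookkeeping in the coefficient comparison together with the collision of exponents for small $t$, above all the $t=4$ case, where the naive count breaks down and one must invoke $\N_{q^t/q}(b)\ne1$ through the norm-one subgroup; a secondary, more routine point is checking that any affine central collineation of $\ptra_f$ is $\Fqt$-linear (a semilinear map fixing a line pointwise has trivial companion automorphism), so that the computation inside $\GL(2,q^t)$ really does account for all elations and homologies.
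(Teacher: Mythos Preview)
The paper does not prove Theorem~\ref{t:jj}; it is quoted verbatim from \cite[Theorem~6.1]{JhJo08}, where Jha and Johnson establish it by a direct analysis of the collineations preserving the fundamental hyper-regulus $\mathcal H_{b,s}$. Your proposal is therefore not a reconstruction of the paper's argument but a genuine alternative that routes through the paper's own Theorem~\ref{t:main}: once that result identifies the automorphisms of $\ptra_f$ with the maps $v\mapsto\lambda(v)+u$ for $\lambda\in\GaL(2,q^t)$ with $\lambda(U_f)\in\{hU_f\}$, the problem reduces to computing $S=\mathrm{Stab}_{\GL(2,q^t)}(U_{P_{b,s}})$ and reading off the geometry of $G=NS$. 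Your coefficient comparison for $S$ is correct, including the $t=4$ case where the collision $2s\equiv t-2s$ forces you to invoke $\N_{q^t/q}(b)\neq1$ via the norm-one subgroup; and the identification $\#G=(q^t-1)\#S/(q-1)$ is right. This approach buys a cleaner, more structural derivation than the original.

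Two places deserve tightening. First, you normalise to $0,1\notin L_f$ and then compute $S$ for the unmodified $P_{b,s}$; since Theorem~\ref{t:main} does not require that normalisation, it is cleaner to drop it for the order computation and only use it (or argue by conjugacy) when locating the homology axes. In particular, your claim that $H_1$ has axis $\la(1,0)\ra_{\Fqt}$ presumes this is a component of $\cB_f$, i.e.\ $0\notin L_{P_{b,s}}$, which can fail for some $b$ with $\N_{q^t/q}(b)\neq1$; the fix is to pass to a $\GL(2,q^t)$-conjugate before naming the axes. Second, the ``routine'' point that an affine central collineation has $\Fqt$-linear part needs the case where the axis is parallel to some $hU_f$: here you must check that an $\Fqt$-semilinear map restricting to the identity on $U_{P_{b,s}}$ has trivial companion automorphism, which follows from the same coefficient comparison (the three exponents $j,\,j+s,\,j+t-s$ are distinct modulo $t$ for $t>3$), but is not entirely immediate.
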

\begin{corollary}\cite[Corollary 6.2]{JhJo08}. 
The translation plane $\ptra_{P_{b,s}}$, $t>3$,  $q > 3$, $\N_{q^t/q}(b)\neq0,1$ and $(s,t)=1$, has
kernel $\Fq$ and cannot be a generalized Andr\'e or an Andr\'e plane.
\end{corollary}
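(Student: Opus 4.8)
The plan is to establish the two assertions separately: the kernel computation from the quasifield construction of Section~\ref{s:spaq}, and the non-(generalized-)Andr\'e property by confronting the collineation group described in Theorem~\ref{t:jj} with the many central collineations a (generalized) Andr\'e plane is forced to carry.

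First I would pin down the kernel. Since $q>3$ one has $\#L_{P_{b,s}}=(q^t-1)/(q-1)\le(q^t-1)/3<q^t-2$, so some triple of points of $\PG(1,q^t)$ is disjoint from $L_{P_{b,s}}$; sending that triple to $\{\la(1,0)\ra_{\Fqt},\la(0,1)\ra_{\Fqt},\la(1,1)\ra_{\Fqt}\}$ by a projectivity, as in Section~\ref{Notations}, yields a scattered $g(x)\in\Fqt[x]$ with $U_g$ in the $\GaL(2,q^t)$-orbit of $U_{P_{b,s}}$ and $0,1\notin L_g$. A representative of this projectivity in $\GL(2,q^t)$, acting on $\Fqt^2$, maps $\cB_{P_{b,s}}$ onto $\cB_g$, so $\ptra_{P_{b,s}}\cong\ptra_g=\ptra(Q_g)$, and Proposition~\ref{p:quasicorpo} gives $K(Q_g)=\Fq$. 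As the kernel of a translation plane is a geometric invariant, $\ptra_{P_{b,s}}$ has kernel $\Fq$.

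For the remaining statement I would argue by contradiction, assuming $\ptra_{P_{b,s}}$ is a generalized Andr\'e plane. By the corollary following Corollary~\ref{c:nt} it is not a semifield plane, hence not Desarguesian, so it is a \emph{proper} generalized Andr\'e plane; having kernel $\Fq$ it is coordinatized by a generalized Andr\'e quasifield on $\Fqt$ with $x\circ m=x^{q^{\lambda(m)}}m$ for some non-constant $\lambda\colon\Fqt^*\to\mathbb{Z}/t\mathbb{Z}$ with $\lambda(1)=0$. For $h$ in the stabilizer $M=\{h\in\Fqt^*\colon\lambda(hm)=\lambda(m)\text{ for all }m\}$ the map $(x,y)\mapsto(x,hy)$ is an affine homology of the associated spread whose axis is the Desarguesian component $\Fqt\times\{0\}$. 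For an ordinary Andr\'e plane $M$ contains the norm-one subgroup $\{h\colon\N_{q^t/q}(h)=1\}$ of order $(q^t-1)/(q-1)$; in general the conditions on $\lambda$ needed to obtain a quasifield force $|M|\ge(q^t-1)/(q^e-1)$ for some proper divisor $e$ of $t$, and for $t>3$ one checks $(q^t-1)/(q^e-1)>q+1$ in every case. Transporting this homology group through the plane isomorphism produces a nontrivial affine central collineation group of $\ptra_{P_{b,s}}$ of order $>q+1$. If $t$ is odd this contradicts Theorem~\ref{t:jj}(1) outright. If $t$ is even, one first uses the theorem stated in the Remark after Corollary~\ref{c:class} (with $\cB_1=\cB_2=\cB_{P_{b,s}}$, $\ell_1+\ell_2=2\le q-2$) to see that every collineation of $\ptra_{P_{b,s}}$ fixing the origin is $\Fqt$-semilinear, and that an affine homology having a Desarguesian component as axis has trivial companion automorphism, hence lies in $\GL(2,q^t)$; this exhibits inside $\GL(2,q^t)$ an affine homology group of order $>q+1$, against the description of the collineation group in Theorem~\ref{t:jj}(2).

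I expect the delicate step to be the structural input that a proper generalized Andr\'e plane of dimension $t>3$ over $\Fq$ must admit an affine homology group of order exceeding $q+1$: for an ordinary Andr\'e plane this is the transparent computation with the norm-one homologies above, but for a genuinely generalized Andr\'e plane it rests on Foulser's description of which twist maps $\lambda$ define a quasifield, which is what yields the lower bound on $|M|$. Alternatively one may invoke this directly from \cite{JhJo08}, the statement being their Corollary~6.2; the point of the argument above is to make explicit how it descends from Theorem~\ref{t:jj} together with the $\GaL$-versus-$\GL$ control furnished by the Remark following Corollary~\ref{c:class}.
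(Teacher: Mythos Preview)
The paper gives no proof of this corollary: it is simply quoted as \cite[Corollary 6.2]{JhJo08}, immediately after Theorem~\ref{t:jj}, with no argument supplied. So there is no ``paper's proof'' to compare your attempt to; what you have written is an attempt to reconstruct the Jha--Johnson argument using the tools developed in the present paper.

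Your kernel computation is correct and is essentially the argument embedded in the proof of Theorem~\ref{t:main} (normalise to $0,1\notin L_g$, apply Proposition~\ref{p:quasicorpo}, then use invariance of the kernel under isomorphism).

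For the non-(generalized-)Andr\'e part, the strategy---exhibit in a (generalized) Andr\'e plane an affine homology group too large to be compatible with Theorem~\ref{t:jj}---is the right one and is exactly how \cite{JhJo08} proceeds. However, your execution has two soft spots you yourself flag or skate over. First, the bound $|M|\ge(q^t-1)/(q^e-1)$ for a \emph{generalized} Andr\'e system is asserted, not proved; it does follow from Foulser's classification, but as written it is a black box, and this is really the whole content of the corollary. Second, in the even-$t$ case you claim the transported homology has a Desarguesian component of $\cB_{P_{b,s}}$ as axis, which is what would force the companion automorphism to be trivial; but the plane isomorphism carrying the generalized-Andr\'e axis $y=0$ into $\cB_{P_{b,s}}$ need not land on a Desarguesian component, so this step is unjustified. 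One way to repair it is to note that the homology group you construct is cyclic of order $|M|>q+1$, and a nontrivial $\Fqt$-semilinear homology of $\cB_{P_{b,s}}$ has companion automorphism of order dividing $t$, so a suitable power already lies in $\GL(2,q^t)$ and still has order exceeding $q+1$; but this needs to be said. Given that the corollary is ultimately imported from \cite{JhJo08}, your closing remark that one may ``invoke this directly'' is in fact what the paper does.
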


In \cite[Section 2]{LoMaTrZh21}, the authors investigate the $\GaL$-equivalence of the set
of type $U_{P_{b,s}}$.
Rephrasing Proposition 2.3 there,
the result reads as follows:
\begin{proposition}\cite{LoMaTrZh21}
  Assume $s,s'<t/2$, and $b,b'\in\Fqt$ with $\N_{q^t/q}(b)\neq1\neq\N_{q^t/q}(b')$.
  Then the subspaces $U_{P_{b,s}}$ and $U_{P_{b',s'}}$ are in the same orbit under the
  action of $\GaL(2,q^t)$ if and only if $s=s'$, and there exist $z\in\Fqt$ and an automorphism 
  $\sigma$ of $\Fqt$ 
  such that
  \begin{equation}\label{e:ejj}
    b'=b^\sigma z^{q^{2s}-1}.
  \end{equation}
  As a consequence, if $q=p^e$, under the action of $\GaL(2,q^t)$ there are at least
  \begin{equation}\label{e:atleast} N_{q,t}=
    \begin{cases}    
    \frac{q-2}{e}\,\frac{\phi(t)}2&\mbox{ for odd }t\\
    \frac{q^2-1-(q+1)}{2e}\,\frac{\phi(t)}2&\mbox{for even }t
    \end{cases}
  \end{equation}
  orbits of subspaces of type $U_{P_{b,s}}$.
\end{proposition}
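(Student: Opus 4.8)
The plan is to establish the orbit criterion first and then read off the count. I would begin by translating membership in a common $\GaL(2,q^t)$-orbit into a polynomial identity. Writing a general element of $\GaL(2,q^t)$ as $\kappa\colon(x,y)\mapsto(x^\sigma,y^\sigma)\left(\begin{smallmatrix}\alpha&\beta\\\gamma&\delta\end{smallmatrix}\right)$ with $\sigma\in\Aut(\Fqt)$ and $\alpha\delta-\beta\gamma\neq0$, and setting $f^\sigma(u)=u^{q^s}+b^\sigma u^{q^{t-s}}$ (the polynomial obtained from $P_{b,s}$ by applying $\sigma$ to its coefficients), the image of a point $(x,P_{b,s}(x))$ is $(\alpha u+\gamma f^\sigma(u),\ \beta u+\delta f^\sigma(u))$ with $u=x^\sigma$. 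Imposing that this lie on the graph of $P_{b',s'}$ shows that $\kappa(U_{P_{b,s}})=U_{P_{b',s'}}$ is equivalent to
\[
\beta u+\delta f^\sigma(u)=\bigl(\alpha u+\gamma f^\sigma(u)\bigr)^{q^{s'}}+b'\bigl(\alpha u+\gamma f^\sigma(u)\bigr)^{q^{t-s'}}\quad\text{for all }u\in\Fqt.
\]
Next I would expand both sides as $\Fq$-linearized polynomials in $u$ and compare the coefficients of $u^{q^i}$, $0\le i<t$, which are $\Fqt$-linearly independent. The left-hand side is supported on the exponents $\{0,s,t-s\}\pmod t$, whereas the right-hand side is a priori supported on the six exponents $\{\pm s',\,s\pm s',\,-s\pm s'\}\pmod t$.

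The crux — and the step I expect to be the main obstacle — is to show that this forces $\gamma=\beta=0$, that is, that the equivalence is realised by a diagonal semilinear map. The coefficients of $u^{q^{s+s'}}$ and of $u^{q^{-s-s'}}$ on the right are $\gamma^{q^{s'}}$ and $b'(\gamma b^\sigma)^{q^{-s'}}$; if $\gamma\neq0$ these are non-zero (recall $b,b'\neq0$), and one must rule out that the six exponents collapse onto $\{0,s,t-s\}$ with the surplus coefficients cancelling. Here the hypotheses $0<s,s'<t/2$ together with $(s,t)=(s',t)=1$ are decisive: they severely restrict which of the congruences $s+s'\equiv0,\pm s$, $\ s'-s\equiv0,\pm s$, etc.\ can hold, and a short case analysis on these congruences (using coprimality to $t$ and the strict bound below $t/2$) eliminates every configuration with $\gamma\neq0$. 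Once $\gamma=0$, invertibility gives $\alpha,\delta\neq0$; since the right-hand side then has support $\{s',t-s'\}$ and carries no exponent $0$, matching the $u$-coefficient forces $\beta=0$; equality of the supports gives $\{s,t-s\}=\{s',t-s'\}$, whence $s=s'$; and matching the two surviving coefficients yields $\delta=\alpha^{q^s}$ and $b'=b^\sigma\alpha^{q^s-q^{t-s}}$. Putting $z=\alpha^{q^{t-s}}$ and using $q^{t+s}\equiv q^s\pmod{q^t-1}$ rewrites the last relation as $b'=b^\sigma z^{q^{2s}-1}$, proving necessity. Sufficiency is the reverse computation: given $s=s'$ and $b'=b^\sigma z^{q^{2s}-1}$, set $\alpha=z^{q^s}$ (so $\alpha^{q^{t-s}}=z$), $\delta=\alpha^{q^s}$, $\beta=\gamma=0$, and verify directly that the resulting diagonal $\kappa\in\GaL(2,q^t)$ satisfies the identity above.

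For the count I would fix an admissible $s$ and study the relation $b\sim b'\iff b'=b^\sigma z^{q^{2s}-1}$ on $\{b\in\Fqt^*:\N_{q^t/q}(b)\neq1\}$. The image $H_s=\{z^{q^{2s}-1}:z\in\Fqt^*\}$ is a subgroup of index $\gcd(q^{2s}-1,q^t-1)=q^{\gcd(2s,t)}-1$; since $(s,t)=1$ one has $\gcd(2s,t)=1$ for odd $t$ and $\gcd(2s,t)=2$ for even $t$, so $H_s=\ker\N_{q^t/q}$ respectively $H_s=\ker\N_{q^t/q^2}$. Taking norms in $b'=b^\sigma z^{q^{2s}-1}$ and noting that the factor $z^{q^{2s}-1}$ can be chosen freely within $H_s$, the relation $b\sim b'$ holds exactly when $\N_{q^t/q}(b')=\N_{q^t/q}(b)^{\bar\sigma}$ for some $\bar\sigma\in\Aut(\Fq)$ (odd $t$), respectively $\N_{q^t/q^2}(b')=\N_{q^t/q^2}(b)^{\bar\sigma}$ for some $\bar\sigma\in\Aut(\Fqd)$ (even $t$); surjectivity of $\Aut(\Fqt)\to\Aut(\Fq)$ and $\Aut(\Fqt)\to\Aut(\Fqd)$ guarantees that every such $\bar\sigma$ lifts.

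Hence, for fixed $s$, the equivalence classes biject with the $\Aut(\Fq)$-orbits on $\Fq^*\setminus\{1\}$ in the odd case and with the $\Aut(\Fqd)$-orbits on $\{w\in\Fqd^*:\N_{q^2/q}(w)\neq1\}$ in the even case. These sets have $q-2$ and $q^2-1-(q+1)$ elements and are acted on by groups of order $e$ and $2e$, so they split into at least $(q-2)/e$, respectively $(q^2-1-(q+1))/(2e)$, orbits. Finally, since the criterion requires $s=s'$, subspaces with distinct admissible indices are never equivalent, and there are $\phi(t)/2$ admissible values $1\le s<t/2$ with $(s,t)=1$. Multiplying the per-$s$ bounds by $\phi(t)/2$ gives the stated lower bound $N_{q,t}$.
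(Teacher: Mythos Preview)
The paper does not supply its own proof of this proposition: it is quoted from \cite{LoMaTrZh21} (note the sentence ``Rephrasing Proposition 2.3 there'' immediately preceding the statement), so there is no in-paper argument to compare against. Your outline is nonetheless the standard route for such $\GaL$-equivalence results and matches what one finds in the cited source: translate the orbit condition into a linearized-polynomial identity, compare the $q$-supports, use the constraints $0<s,s'<t/2$ together with $(s,t)=(s',t)=1$ to force the off-diagonal entry $\gamma$ to vanish, then read off $s=s'$ and the relation $b'=b^\sigma\alpha^{q^s-q^{t-s}}$; the rewriting via $z=\alpha^{q^{t-s}}$ and the count through norms are also the expected steps, and your identification $H_s=\ker\N_{q^t/q}$ for odd $t$ (resp.\ $\ker\N_{q^t/q^2}$ for even $t$) and the descent from $\Aut(\Fqt)$ to $\Aut(\Fq)$ (resp.\ $\Aut(\Fqd)$) are correct.

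The one place where the proposal is a promise rather than a proof is the sentence ``a short case analysis on these congruences \dots eliminates every configuration with $\gamma\neq0$''. That case analysis is precisely the content of the necessity direction, and while it is indeed routine, it is not entirely trivial: one has to treat the possible coincidences among the six exponents $\pm s'$, $s\pm s'$, $-s\pm s'$ modulo $t$ (for example $s+s'\equiv -s$, i.e.\ $s'=t-2s$, or $s-s'\equiv 0$, i.e.\ $s=s'$), including the sub-case $\alpha=0$, and check that no combination allows the right-hand side to collapse onto $\{0,s,t-s\}$ with a nonzero $\gamma$. A couple of lines making at least one representative case explicit would turn your sketch into a complete argument.
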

The subspaces $U_{P_{b,s}}$ and $U_{P_{b^{-1},t-s}}$ are in the same orbit under 
the action of $\GL(2,q^t)$.
Since for $q>3$ two planes $\ptra_f$ and $\ptra_{f'}$ are isomorphic if and only if
$U_f$ and $U_{f'}$ are in the same orbit under the action of $\GaL(2,q^t)$, one obtains
by \eqref{e:ejj}:
\begin{theorem}\label{t:nesatto}
The number of pairwise non-isomorphic planes of type $\ptra_{P_{b,s}}$, $t>3$,  $q > 3$,
equals
\begin{enumerate}[(1)]
\item the number of the orbits in $\Fq\setminus\{0,1\}$ under the action of the Galois group
$\Gal(\Fq/\Fp)$ over the prime subfield of $\Fq$, if $t$ is odd, or
\item the number of the orbits in $\F_{q^2}\setminus\{x^{q-1}\colon x\in\F_{q^2}\}$
under the action of $\Gal(\F_{q^2}/\Fp)$, if $t$ is even.
\end{enumerate}
\end{theorem}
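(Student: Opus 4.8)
The plan is to turn the statement, via Theorem~\ref{t:main}, into a count of $\GaL(2,q^t)$-orbits of the subspaces $U_{P_{b,s}}$, and then to evaluate that count using the two facts recalled just before the statement: the $\GL(2,q^t)$-equivalence $U_{P_{b,s}}\sim U_{P_{b^{-1},t-s}}$ and the criterion~\eqref{e:ejj}. First I would fix $s$ with $(s,t)=1$; since $b\mapsto b^{-1}$ is a bijection of $B:=\{b\in\Fqt^*\colon\N_{q^t/q}(b)\neq1\}$ onto itself and $\ptra_{P_{b,s}}\cong\ptra_{P_{b^{-1},t-s}}$, replacing $s$ by $t-s$ does not change the count, so I may assume $1\le s<t/2$ (note that $(s,t)=1$ with $t>3$ rules out $s=t/2$). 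For such an $s$, Theorem~\ref{t:main} (using $q>3$) and~\eqref{e:ejj} give that $\ptra_{P_{b,s}}\cong\ptra_{P_{b',s}}$ if and only if $b'=b^\sigma z^{q^{2s}-1}$ for some $z\in\Fqt^*$ and some automorphism $\sigma$ of $\Fqt$. So the task reduces to counting the equivalence classes on $B$ of the relation $b\sim b'\iff b'\in\{\,b^\sigma h\colon\sigma\in\Aut(\Fqt),\ h\in H\,\}$, where $H=\{z^{q^{2s}-1}\colon z\in\Fqt^*\}$.

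The decisive step is to identify $H$. Because $(s,t)=1$ one has $\gcd(2s,t)=\gcd(2,t)$, hence $\gcd(q^{2s}-1,q^t-1)=q^{\gcd(2,t)}-1$ and $|H|=(q^t-1)/(q^{\gcd(2,t)}-1)$. I would then note that $\N_{q^t/q}(z^{q^{2s}-1})=\N_{q^t/q}(z)^{q^{2s}-1}=1$ (as $\N_{q^t/q}(z)\in\Fq^*$ is fixed by $x\mapsto x^q$), so $H\subseteq\ker\N_{q^t/q}$ when $t$ is odd, and the order count forces $H=\ker\N_{q^t/q}$; when $t$ is even the same computation with $q^2$ in place of $q$ gives $H=\ker\N_{q^t/q^2}$ (here $2\mid t$ ensures $\F_{q^2}\subseteq\Fqt$). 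Writing $\N$ for $\N_{q^t/q}$ if $t$ is odd and for $\N_{q^t/q^2}$ if $t$ is even, belonging to the same $H$-coset is detected by $\N$, so $b\sim b'$ becomes $\N(b')=\N(b^\sigma)=\N(b)^\sigma$ for some $\sigma\in\Aut(\Fqt)$. Since $\N(b)$ lies in the subfield $\Fq$ (resp.\ $\F_{q^2}$) of $\Fqt$, the automorphisms $\sigma$ act on $\N(b)$ through the whole of $\Gal(\Fq/\Fp)$ (resp.\ $\Gal(\F_{q^2}/\Fp)$); hence $b\mapsto\N(b)$ induces a bijection between the $\sim$-classes on $B$ and the orbits of that Galois group on $\N(B)$.

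It remains to describe $\N(B)$. When $t$ is odd, $\N_{q^t/q}$ is surjective onto $\Fq^*$, so $\N(B)=\Fq^*\setminus\{1\}=\Fq\setminus\{0,1\}$, which is assertion~(1). When $t$ is even, put $c=\N_{q^t/q^2}(b)$; by transitivity of the norm $\N_{q^t/q}(b)=\N_{q^2/q}(c)$, and $\N_{q^2/q}(c)\neq1$ exactly when $c\notin\ker\N_{q^2/q}=\{x^{q-1}\colon x\in\F_{q^2}^*\}$. Using that $\N_{q^t/q^2}$ is surjective onto $\F_{q^2}^*$ one gets $\N(B)=\F_{q^2}^*\setminus\{x^{q-1}\colon x\in\F_{q^2}^*\}=\F_{q^2}\setminus\{x^{q-1}\colon x\in\F_{q^2}\}$, which is assertion~(2).

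I expect the main obstacle to be precisely the identification $H=\ker\N$: it is what forces the answer to depend only on the parity of $t$ and not on the chosen admissible $s$, and it rests on $(s,t)=1$ through the elementary facts $\gcd(2s,t)=\gcd(2,t)$ and $\gcd(q^a-1,q^b-1)=q^{\gcd(a,b)}-1$ together with the order count. The remaining ingredients — surjectivity and transitivity of the norm, the identity $\ker\N_{q^2/q}=\{x^{q-1}\colon x\in\F_{q^2}^*\}$, and the remark that the companion automorphism realises every element of $\Gal(\Fq/\Fp)$ (resp.\ $\Gal(\F_{q^2}/\Fp)$) on $\N(b)$ because $\Fq$ (resp.\ $\F_{q^2}$) is a subfield of $\Fqt$ — are routine, as is the bookkeeping for the small exceptional values of $s$ and $t$.
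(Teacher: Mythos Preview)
Your proposal is correct and follows the same route the paper sketches: reduce via Theorem~\ref{t:main} to $\GaL(2,q^t)$-orbits and then apply~\eqref{e:ejj}, the key step you supply (and the paper leaves implicit) being the identification $\{z^{q^{2s}-1}\colon z\in\Fqt^*\}=\ker\N_{q^t/q^{\gcd(2,t)}}$ via $\gcd(2s,t)=\gcd(2,t)$, which is precisely what makes the answer depend only on the parity of~$t$. One point left tacit both by you and by the paper: the stated count is for a \emph{fixed} admissible~$s$, since by the quoted proposition distinct $s,s'<t/2$ give $\GaL$-inequivalent subspaces and hence non-isomorphic planes; the total over all~$s$ therefore carries an extra factor $\phi(t)/2$, consistent with Corollary~\ref{c:disaccordo} and the Remark that follows.
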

\begin{corollary}\label{c:disaccordo}
  For $q>3$ and $t>3$ there are at least $N_{q,t}$  mutually non-isomorphic
  translation planes of order $q^t$ and kernel $\Fq$ that may be obtained from a  Desarguesian spread  
  by replacement of a hyper-regulus of the type $\mathcal{H}_{b,s}$ with $(s,t)=1$.
\end{corollary}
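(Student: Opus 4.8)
The plan is to realise the planes in the statement as the planes $\ptra_{P_{b,s}}$ with $(s,t)=1$ and $\N_{q^t/q}(b)\neq0,1$, and then to deduce the count from Theorem~\ref{t:main} combined with the orbit estimate of \cite{LoMaTrZh21} recalled above. First I would check that each such $\ptra_{P_{b,s}}$ (with $t>3$, $q>3$) is of the required type. Since $P_{b,s}$ is scattered by \cite{LuPo01}, $\cB_{P_{b,s}}$ is a planar spread of $\Fqt^2$, so $\ptra_{P_{b,s}}$ is a translation plane of order $q^t$, and its kernel is $\Fq$ by \cite[Corollary 6.2]{JhJo08}. Moreover, by the identity $V_{b,d,s}=dU_{P_{b,s}}$ noted above, the set $\cB_{P_{b,s}}\setminus\cD$ of components of $\cB_{P_{b,s}}$ not lying in the Desarguesian spread is exactly $\mathcal{H}_{b,s}=\{V_{b,d,s}\colon d\in\Fqt^*\}$; by \cite[Theorem 4.1]{JhJo08} this is a replacement set of a fundamental hyper-regulus contained in $\cD$, so $\cB_{P_{b,s}}$ is obtained from $\cD$ by replacing a hyper-regulus of the type $\mathcal{H}_{b,s}$ with $(s,t)=1$.

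Next I would count these planes up to isomorphism. As $(s,t)=1$ and $t>3$ we have $s\neq t/2$, and since $U_{P_{b,s}}$ and $U_{P_{b^{-1},t-s}}$ lie in one $\GL(2,q^t)$-orbit (whence $\ptra_{P_{b,s}}\cong\ptra_{P_{b^{-1},t-s}}$) we may restrict attention to $s<t/2$. By the Proposition of \cite{LoMaTrZh21} recalled above, the subspaces $U_{P_{b,s}}$ with $s<t/2$ and $\N_{q^t/q}(b)\neq1$ fall into at least $N_{q,t}$ distinct orbits under $\GaL(2,q^t)$, where $N_{q,t}$ is as in \eqref{e:atleast}. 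Pick polynomials $P_{b_1,s_1},\dots,P_{b_N,s_N}$ with $N=N_{q,t}$ and $\N_{q^t/q}(b_i)\neq0,1$ whose associated subspaces represent $N_{q,t}$ of these orbits. Since $q>3$, Theorem~\ref{t:main} yields $\ptra_{P_{b_i,s_i}}\cong\ptra_{P_{b_j,s_j}}$ if and only if $U_{P_{b_i,s_i}}$ and $U_{P_{b_j,s_j}}$ are in the same $\GaL(2,q^t)$-orbit, which by construction fails for $i\neq j$. Hence $\ptra_{P_{b_1,s_1}},\dots,\ptra_{P_{b_N,s_N}}$ are $N_{q,t}$ pairwise non-isomorphic translation planes of order $q^t$ and kernel $\Fq$, each obtained from a Desarguesian spread by replacement of a hyper-regulus of the type $\mathcal{H}_{b,s}$ with $(s,t)=1$, which is the assertion.

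I expect no genuine obstacle: the argument is essentially bookkeeping. The only points that need a little care are the identification of $\cB_{P_{b,s}}\setminus\cD$ with $\mathcal{H}_{b,s}$ (which is precisely the content of $V_{b,d,s}=dU_{P_{b,s}}$) and the verification that the hypotheses of Theorem~\ref{t:main} and of the \cite{LoMaTrZh21} proposition are simultaneously available, which holds as soon as $q>3$ and $t>3$.
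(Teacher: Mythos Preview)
Your proposal is correct and follows the same route as the paper. The paper gives no explicit proof of this corollary, treating it as an immediate consequence of the Proposition from \cite{LoMaTrZh21} (the orbit bound \eqref{e:atleast}) combined with Theorem~\ref{t:main}, after having recorded the identification $V_{b,d,s}=dU_{P_{b,s}}$; your write-up simply spells out these steps, including the routine verifications about order, kernel, and the hyper-regulus replacement.
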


\begin{remark} Corollary \ref{c:disaccordo} does not agree with 
\cite[Theorem 5.5 \& Corollary 5.6]{JhJo08}.
It is opinion of the authors of this paper that the argument at \cite[p.96, line 11]{JhJo08}
is incorrect.
As a counterexample, let $t>3$ be odd, $q=p^e$, $p$ a prime, $e>1$, $0<s<t$ such that $(s,t)=1$;
furthermore, take $t$ and $e$ coprime with $q-2$.
In \cite{JhJo08} it is asserted that there are at least $(q-2)/(te,q-2)=q-2$ non-isomorphic
planes of type $\ptra_{P_{b,s}}$.
See also at p.97, line 5 from the bottom, where the same assertion is involved.
However, by Theorem \ref{t:nesatto},
the number of such planes is less than $q-2$.

\end{remark}

\section*{Acknowledgment}
The authors are grateful to Guglielmo Lunardon for sowing the seeds of this work, 
and for his valuable suggestions and comments.

\noindent
Valentina Casarino, Giovanni Longobardi and Corrado Zanella\\
Dipartimento di Tecnica e Gestione dei Sistemi Industriali\\
Universit\`a degli Studi di Padova\\
Stradella S. Nicola, 3\\
36100 Vicenza VI - Italy\\
\emph{\{valentina.casarino,giovanni.longobardi,corrado.zanella\}@unipd.it}

\end{document}